\newtheorem{dfn} [subsection]{Definition}
\newtheorem{obs} [subsection]{Remark}
\newtheorem{exm} [subsection]{Example}
\newtheorem{prop}[subsection]{Proposition}
\newtheorem{conj}[subsection]{Conjecture}
\newtheorem{teor}[subsection]{Theorem}
\newtheorem{cor} [subsection]{Corollary}
\def\supp{\operatorname{Cons}}
\def\Cons{\operatorname{Cons}}
\def\Ch{\operatorname{Ch}}
\def\Char{\operatorname{Char}}
\def\Lin{\operatorname{Lin}}
\def\Hilb{\operatorname{Hilb}}
\def\Irr{\operatorname{Irr}}
\def\Hol{\operatorname{Hol}}
\def\Sup{\operatorname{Sup}}
\def\Reg{\operatorname{Reg}}
\def\Mon{\operatorname{Mon}}
\def\Cons{\operatorname{Cons}}
\def\supp{\operatorname{supp}}
\def\Ker{\operatorname{Ker}}
\numberwithin{equation}{section}
\begin{document}
\selectlanguage{english}
\frenchspacing

\large
\begin{center}
\textbf{On the monomial algebra associated to the monomial characters of a finite group}

Mircea Cimpoea\c s and Alexandru F. Radu
\end{center}
\normalsize

\begin{abstract} 

Given a finite group $G$, we study the monomial algebra $R_G$, generated by the monomial characters of $G$.
In particular, we note that the integral closure of $R_G$ is contained in the algebra generated by those
characters $\chi$ for which their associated Artin L-function $L(s,\chi)$ is holomorphic 
at $s_0\in\mathbb C\setminus\{1\}$. Also, we discuss the supercharacter theoretic case.

\noindent \textbf{Keywords:} Artin L-function; monomial group; almost monomial group; supercharacter theory.

\noindent \textbf{2020 Mathematics Subject Classification:} 11R42; 20C15.
\end{abstract}

\section{Introduction}

A group $G$ is called \emph{monomial}, if every complex irreducible character $\chi$ of $G$ is induced by a linear character $\lambda$ of
a subgroup $H$ of $G$, that is $\chi=\lambda^G$. A group $G$ is called \emph{quasi-monomial}, if for every irreducible character $\chi$ of $G$, 
there exists a subgroup $H$ of $G$ and a linear character $\lambda$ of $H$ such that $\lambda^G=d\chi$, where $d$ is a positive integer. 
A finite group $G$ is called {\em almost monomial} if for every distinct complex irreducible characters $\chi$ and $\psi$ of  $G$ 
there exist a subgroup $H$ of $G$ and a linear character $\lambda$ of $H$ such that the induced character $\lambda^G$ contains 
$\chi$ and does not contain  $\psi$. 

Let $G$ be a finite group with $\Irr(G)=\{\chi_1,\ldots,\chi_r\}$. Let $M(G)$ be the monoid generated by the monomial characters of $G$.
Let $K$ be an arbitrary field and let $S=K[x_1,\ldots,x_r]$ be the polynomial ring in $n$ indeterminates over $K$. 
We consider the $K$-algebra $R_G=K[M(G)]$ and we assume that it is minimally generated by the monomials $u_1,\ldots,u_p$.
In Proposition $2.5$ we give a characterization of almost monomial groups in terms of $u_1,\ldots,u_p$.
Let $N\unlhd G$ be a normal subgroup pf $G$. In Proposition $2.9$,
we prove that $R_{G/N}=R_G\cap K[x_1,\ldots,x_s]$, where $N\subseteq \Ker(\chi_i)$ if and only if $1\leq i\leq s$.
In Theorem $2.10$, we prove that $R_{G_1\times G_2}=R_{G_1}\otimes_K R_{G_2}$.

Let $K/\mathbb Q$ be a finite Galois extension with Galois group $G$. For any character $\chi$ of $G$, let $L(s,\chi):=L(s,\chi,K/\mathbb Q)$
be the corresponding Artin L-function (\cite[P.296]{artin2}). Let $\Hol(s_0)$ be the semigroup of Artin $L$-functions, holomorphic at $s_0\in\mathbb C\setminus\{1\}$;
see \cite{monat} for further details. In Theorem $2.11$, we prove that the integral closure of $R_G$ is a subalgebra of $K[\Hol(s_0)]$.

The notion of a supercharacter theory for a finite group was introduced in 2008, by Diaconis and Isaacs \cite{isaacs}, as follows: 
A supercharacter theory of a finite group $G$, is a pair $C=(\mathcal X,\mathcal K)$ where $\mathcal X=\{X_1,\ldots,X_r\}$
is a partition of $\Irr(G)$, the set of irreducible characters of $G$, and $\mathcal K$ is a partition of $G$, such that: (1) $\{1\}\in\mathcal K$, 
(2) $|\mathcal X|=|\mathcal K|$ and (3) $\sigma_X:=\sum_{\psi\in X}\psi(1)\psi$ is constant for each $X\in\mathcal X$ and $K\in\mathcal K$. 
In Section $3$, we extend our main results in the frame of supercharacter theory.
In Section $4$, we presend a GAP code \cite{gap} which returns the Hilbert basis of the monoid $M(G)$. Also, we conjecture a formula
for $R_G$, where $G=SL(2,2^n)$; see Conjecture $4.1$.

\newpage
\section{Main results}

Let $G$ be a finite group with $\Irr(G)=\{\chi_1,\ldots,\chi_r\}$, the set of irreducible (complex) characters.
It is well known that any character $\chi$ of $G$ can be written uniquely as $$\chi=a_1\chi_1+\cdots+a_r\chi_r,$$
where $a_i$'s are non-negative integers and at least on of them is positive. 

A \emph{virtual character} $\chi$ of $G$ is a linear
combination $\chi:=a_1\chi_1+\cdots+a_r\chi_r$, with $a_i\in\mathbb Z$. The set $\Char(G)$ of virtual characters of $G$ has
a structure of a ring and it's called the is the character ring of $G$. In particular, $(\Char(G),+)$ is an abelian group,
isomorphic to $(\mathbb Z^r,+)$. We denote $\Ch_+(G)$, the set of characters of $G$. $\Ch(G):=\Ch^+(G)\cup\{0\}$ is a 
submonoid of $(\Char(G),+)$, isomorphic to $(\mathbb N^r,+)$.

A character $\chi$ is \emph{monomial}, if there exists a subgroup $H\leqslant G$ and a linear character $\lambda$ of $H$ such that $\lambda^G=\chi$.
We denote $\Mon(G)$, the set of monomial characters of $G$. We let $M(G)$ be the submonoid of $\Ch(G)$, generated by $\Mon(G)$, that is:
$$M(G)=\{\chi\in \Ch(G)\;:\;\chi=\psi_1+\cdots+\psi_m\text{ where }m\geq 0,\;\psi_i\in\Mon(G),\text{ for all }1\leq i\leq m\}.$$
Note that $M(G)$ is a finitely generated submonoid of $\Ch(G)$, hence it is affine. Moreover, $M(G)$ is positive, as $0$ is the only unit of it.
Let $\Hilb(M(G))\subset \Mon(G)$ be the Hilbert basis of $M(G)$, that is the minimal system of generators.

We assume that $\Hilb(M(G))=\{\sigma_1,\ldots,\sigma_p\}$, where $\sigma_i=a_{i1}\chi_1+\cdots+a_{ir}\chi_r,$
for some nonnegative integers $a_{ij}$. We denote $A:=A(G)=(a_{ij})_{\substack{i=\overline{1,p} \\ j=\overline{1,r}}} \in \mathcal M_{p,r}(\mathbb Z)$.

Brauer's induction theorem \cite{brauer} shows that $(\Char(G),+)$ is generated, as a group, by $\Mon(G)$. In particular, $\Hilb(M(G))$ generates $(\Char(G),+)$ as 
a group. In particular, $M(G)$ is a monoid of the rank $r$ and therefore $p\geq r$.

Let $K$ be an arbitrary field and let $S=K[x_1,\ldots,x_r]$ be the polynomial ring in $n$ indeterminates over $K$.
We consider the $K$-algebra
$$ R_G:=K[M(G)]:=K[u_1,\ldots,u_p] \subset S,\;u_i:=\prod_{j=1}^r x_j^{a_{ij}},\;1\leq i\leq p.$$

\begin{obs}\emph{
Assuming that $\chi_1=1_G$ is the trivial character of $G$, it follows that $u_1=x_1 \in R_G$. Let $\Reg(G)=\sum_{i=1}^r d_i\chi_i$
be the regular character of $G$ and let 
$$u_G:=x_1^{d_1}x_2^{d_2}\cdots x_r^{d_r}\in R_G$$ be the corresponding monomial.
According do Brauer-Aramata Theorem, see \cite{arama} and \cite{brauer}, $\Reg(G)-1_G$ can be written as a linear combination with rational
positive coefficients of monomial characters. It follows that there exists an integer $\alpha \geq 1$ such that $\alpha (\Reg(G)-1_G)\in M(G)$.
Therefore: 
\begin{equation}\label{unu}
(u_G/x_1)^{\alpha} = x_2^{\alpha d_2}\cdots x_r^{\alpha d_r}\in R_G.
\end{equation}
Moreover, according to \cite[Theorem 2]{rhoades}, which generalizes Aramata-Brauer Theorem, using a similar argument as above, for any $j\in\{1,\ldots,r\}$,
there exists a smallest integer $\alpha_j\geq 1$ such that: 
\begin{equation}\label{doi}
(u_G/x_j)^{\alpha_j} = x_1^{\alpha_j d_1}\cdots x_{j-1}^{\alpha_j d_{j-1}}\cdot x_j^{\alpha_j(d_{j}-1)}\cdot
   x_{j+1}^{\alpha_j d_{j+1}} \cdots x_{r}^{\alpha_j d_{r}} \in R_G.
\end{equation}}
\end{obs}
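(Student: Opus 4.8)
\medskip
\noindent\textbf{Proof proposal.} The Remark records three facts — that $x_1\in R_G$, that $u_G\in R_G$, and the identities \eqref{unu} and \eqref{doi} — and my plan is to derive all of them from the correspondence between elements of $M(G)$ and monomials of $R_G$, together with two classical ``effectivity'' theorems. First I would note that, since $\chi_1=1_G$ is a linear character of $G$, it is monomial ($1_G=(1_G)^G$), and it is indecomposable in $M(G)$: an expression $1_G=\psi_1+\cdots+\psi_m$ with nonzero $\psi_i\in\Mon(G)$ would force $1=1_G(1)=\sum_i\psi_i(1)\geq m$. Hence $1_G\in\Hilb(M(G))$, so $\sigma_1=\chi_1$ and $u_1=x_1$. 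Next, writing $\Reg(G)=\lambda^G$ with $\lambda$ the trivial character of the trivial subgroup $H=\{e\}$, we see $\Reg(G)\in\Mon(G)\subseteq M(G)$; since $R_G=K[M(G)]$ is by definition the $K$-span of the monomials $u_\chi:=\prod_{j}x_j^{a_j}$ attached to the characters $\chi=\sum_j a_j\chi_j\in M(G)$, this already gives $u_G=u_{\Reg(G)}\in R_G$.

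For \eqref{unu} I would invoke the Aramata--Brauer theorem in its representation-theoretic form (\cite{arama,brauer}): the virtual character $\Reg(G)-1_G$ is a non-negative rational combination $\sum_i c_i\psi_i$ of monomial characters $\psi_i\in\Mon(G)$. Letting $\alpha$ be the least common multiple of the denominators of the $c_i$, we get $\alpha(\Reg(G)-1_G)=\sum_i(\alpha c_i)\psi_i\in M(G)$. Since $1_G(1)=1$, this virtual character equals $\alpha d_2\chi_2+\cdots+\alpha d_r\chi_r$, whose attached monomial is $x_2^{\alpha d_2}\cdots x_r^{\alpha d_r}=(u_G/x_1)^{\alpha}$; being attached to an element of $M(G)$, it lies in $R_G$, which is \eqref{unu}. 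For \eqref{doi} the argument is the same, with \cite[Theorem 2]{rhoades} — which generalises Aramata--Brauer — supplying, for each $j\in\{1,\ldots,r\}$, a non-negative rational expression of $\Reg(G)-\chi_j$ in monomial characters (here the $\chi_j$-coefficient $d_j-1$ is automatically $\geq 0$ since $d_j\geq 1$). Clearing denominators produces an integer $\alpha_j\geq 1$ with $\alpha_j(\Reg(G)-\chi_j)\in M(G)$; the set of such integers is non-empty, so it has a least element by well-ordering, and the monomial attached to $\alpha_j(\Reg(G)-\chi_j)$ is exactly the right-hand side of \eqref{doi}.

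The only step that is not purely formal is the input from Aramata--Brauer and \cite{rhoades}: passing from the holomorphy of $\zeta_K/\zeta_{\mathbb Q}$ (respectively of the corresponding quotient of Artin $L$-functions) to the purely character-theoretic statement that $\Reg(G)-1_G$ (respectively $\Reg(G)-\chi_j$) is a non-negative $\mathbb Q$-linear combination of monomial characters. I would treat this as a black box, since it is precisely the content of the cited theorems; the remaining ingredients — recognising $1_G$ and $\Reg(G)$ as monomial characters, the monoid-to-monomial dictionary, and clearing denominators — are routine.
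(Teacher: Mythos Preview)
Your proposal is correct and follows essentially the same route as the paper's Remark: both rely on the Aramata--Brauer theorem and its generalisation by Rhoades to write $\Reg(G)-1_G$ (respectively $\Reg(G)-\chi_j$) as a non-negative rational combination of monomial characters, then clear denominators to land in $M(G)$ and translate to the monomial side. You actually supply two small justifications that the paper leaves implicit --- namely that $1_G$ is indecomposable in $M(G)$ (hence belongs to $\Hilb(M(G))$, giving $u_1=x_1$) and that $\Reg(G)=(1_{\{e\}})^G$ is itself monomial (giving $u_G\in R_G$) --- but these are the natural reasons and do not constitute a different approach.
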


We have the canonical epimorphism: $$\Phi:K[t_1,\ldots,t_p]\to R_G,\;\Phi(t_i):=u_i,\;1\leq i\leq p.$$
The ideal $I_G:=\Ker(\Phi)$ is the toric ideal of $M(G)$. Note that $I_G=\{0\}$, that is, the monoid $M(G)$ is factorial, 
if and only if $r=p$.

According to Brauer's induction Theorem, any irreducible character $\chi$ of $G$, can be written as a linear combination of monomial characters,
with integer coefficients. It follows that:
\begin{equation}\label{cur}
K[u_1^{\pm 1},\ldots,u_p^{\pm 1}] = K[x_1^{\pm 1},\ldots,x_r^{\pm 1}],
\end{equation}
the ring of Laurent polynomials. In particular, we have that $\dim(M(G))=r$, hence $p\geq r$. Moreover, 
$p=r$ if and only if $I_G=\{0\}$.

The identity \eqref{cur} is equivalent to say that there exists a $B\in \mathcal M_{r,p}(\mathbb Z)$ such that
$A\cdot B = I_r$. In particular, if $p=r$, this is equivalent to $\det(A)\in \{-1,+1\}$.

We state without proof, the following (obvious) result:

\begin{prop}\label{p21}
Let $G$ be a finite group. Then:
\begin{enumerate}
\item[(1)] $G$ is monomial if and only if $\Hilb(M(G))=\Irr(G)$ if and only if $R_G=S=K[x_1,\ldots,x_r]$.
\item[(2)] $G$ is quasi-monomial if and only if $\Hilb(M(G))=\{a_1\chi_1,\ldots,a_r\chi_r\}$ if and only if $R_G=S=K[x_1^{a_1},\ldots,x_r^{a_r}]$,
           where $a_1,\ldots,a_r$ are some positive integers.
\end{enumerate}
\end{prop}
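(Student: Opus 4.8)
The plan is to prove each equivalence by unwinding the definitions of the relevant group-theoretic and monoid-theoretic notions, together with Proposition-free elementary facts already recorded in the excerpt.

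\medskip

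\noindent\textbf{Part (1).} First I would argue that $G$ is monomial if and only if $\Hilb(M(G)) = \Irr(G)$. By definition, $G$ monomial means every $\chi_j \in \Irr(G)$ is itself a monomial character, i.e.\ $\Irr(G) \subseteq \Mon(G)$. Since $\Irr(G)$ is a $\mathbb{Z}$-basis of $\Char(G)$ and each $\chi_j$ is irreducible, no $\chi_j$ can be written as a sum of two or more nonzero characters; hence if $\Irr(G) \subseteq \Mon(G)$, then every $\chi_j$ is an indecomposable element of $M(G)$, so $\Irr(G)$ must be contained in any generating set, and conversely $\Irr(G)$ already generates $\Ch(G) \supseteq M(G)$, forcing $\Hilb(M(G)) = \Irr(G)$. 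For the converse, if $\Hilb(M(G)) = \Irr(G)$ then each $\chi_j$ lies in $\Mon(G)$ (the Hilbert basis is a subset of $\Mon(G)$ by the construction in the excerpt), so $G$ is monomial. The translation $\Hilb(M(G)) = \Irr(G) \iff R_G = S$ is then immediate: when $\Hilb(M(G)) = \{\chi_1,\ldots,\chi_r\}$ we have $a_{ij} = \delta_{ij}$, so $u_i = x_i$ and $R_G = K[x_1,\ldots,x_r] = S$; conversely $R_G = S$ means $x_1,\ldots,x_r \in R_G$, and since $R_G$ is generated by monomials $u_i = \prod_j x_j^{a_{ij}}$, a minimal monomial generating set of the polynomial ring must be exactly $\{x_1,\ldots,x_r\}$ (a standard fact: the variables are the only monomials not expressible as products of other monomials of positive degree), which by minimality of the Hilbert basis gives $\{u_1,\ldots,u_p\} = \{x_1,\ldots,x_r\}$, hence $\Hilb(M(G)) = \Irr(G)$.

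\medskip

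\noindent\textbf{Part (2).} The argument is structurally the same. If $G$ is quasi-monomial, then for each $j$ there is a monomial character of the form $a_j \chi_j$ with $a_j \geq 1$; choosing $a_j$ minimal, I claim $\{a_1\chi_1,\ldots,a_r\chi_r\}$ is exactly $\Hilb(M(G))$. Indeed each $a_j\chi_j \in \Mon(G) \subseteq M(G)$, and these elements generate $M(G)$ because any monomial character $\lambda^G = \sum_j c_j \chi_j$ must, by quasi-monomiality applied coordinatewise together with the fact that $M(G)$ has rank $r$ with the $\chi_j$ spanning the ambient lattice, be an $\mathbb{N}$-combination of the $a_j\chi_j$ — here I would use that $M(G)$ sits inside $\bigoplus_j \mathbb{N}(a_j\chi_j)$ once we know each generator does, which follows since every $\lambda^G$ is parallel to some axis. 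Minimality of each $a_j$ makes $a_j\chi_j$ indecomposable in $M(G)$, so it lies in the Hilbert basis, and no other elements are needed. Conversely, if $\Hilb(M(G)) = \{a_1\chi_1,\ldots,a_r\chi_r\}$ then each $a_j\chi_j \in \Mon(G)$, which is precisely the quasi-monomial condition. Finally $R_G = K[x_1^{a_1},\ldots,x_r^{a_r}]$ follows by reading off the exponent matrix $A = \mathrm{diag}(a_1,\ldots,a_r)$, and conversely $R_G = K[x_1^{a_1},\ldots,x_r^{a_r}]$ forces the minimal monomial generators to be the pure powers $x_j^{a_j}$, giving back the stated Hilbert basis.

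\medskip

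\noindent The step requiring the most care is showing, in both parts, that the given candidate set is \emph{minimal} and \emph{generates} all of $M(G)$ — in particular that in the quasi-monomial case every monomial character is forced to lie on a coordinate axis of the lattice $\bigoplus_j \mathbb{Z}\chi_j$. This is where one genuinely uses the hypothesis rather than just the definition; the remaining translations between the monoid $M(G)$, its Hilbert basis, and the monomial algebra $R_G$ are the routine dictionary between affine monoids and their associated monomial $K$-algebras, and the excerpt itself flags this proposition as "obvious," so I would keep those verifications brief.
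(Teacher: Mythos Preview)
The paper states this proposition without proof, labelling it ``obvious,'' so there is no reference argument to compare against directly. Your treatment of Part~(1) is correct and complete.

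Part~(2), however, has a genuine gap in the forward implication ``$G$ quasi-monomial $\Rightarrow$ $\Hilb(M(G))=\{a_1\chi_1,\ldots,a_r\chi_r\}$.'' To show that the $a_j\chi_j$ generate $M(G)$ you assert that ``every $\lambda^G$ is parallel to some axis.'' This is simply false: the regular character $\Reg(G)=(1_{\{1\}})^G=\sum_j d_j\chi_j$ is always a monomial character and lies on no axis once $r\geq 2$. The quasi-monomial hypothesis only says that for each $j$ \emph{some} monomial character equals a multiple of $\chi_j$; it places no restriction on the remaining elements of $\Mon(G)$. So nothing in your argument excludes a monomial character such as $\chi_1+\chi_2$ which, if $a_1,a_2>1$, would be irreducible in $M(G)$ and hence belong to the Hilbert basis alongside the $a_j\chi_j$.

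In fact the gap is not merely cosmetic. Suppose the forward implication holds, so $M(G)=\bigoplus_j \mathbb N\,a_j\chi_j$; then every $\psi\in\Mon(G)$ has $a_j\mid\langle\psi,\chi_j\rangle$. By Brauer's induction theorem $\chi_j=\sum_k n_k\psi_k$ with $\psi_k\in\Mon(G)$ and $n_k\in\mathbb Z$, whence $1=\langle\chi_j,\chi_j\rangle=\sum_k n_k\langle\psi_k,\chi_j\rangle\in a_j\mathbb Z$, forcing $a_j=1$ for every $j$. Thus the implication you are trying to prove would entail that every quasi-monomial group is already monomial---so it is at least as hard as that question, and cannot be obtained by ``unwinding the definitions'' as you propose. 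The paper's claim that this direction is obvious appears to be an overstatement; your flagging of this step as ``requiring the most care'' is well placed, but the care is not supplied.
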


According to Proposition \ref{p21}, if $G$ is (quasi)-monomial, then $I_G=\{0\}$. 
We propose the following Conjecture:

\begin{conj}\label{conj1}
If $G$ is a finite group with $I_G=\{0\}$ then $G$ is monomial.
\end{conj}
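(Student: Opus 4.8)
The plan is to recast $I_G=\{0\}$ semigroup-theoretically and then feed in classical induction theory. Since $\Hilb(M(G))$ generates $(\Char(G),+)\cong\mathbb Z^r$ (Brauer), $I_G=\{0\}$ forces $p=r$, so $\sigma_1,\dots,\sigma_r$ is a $\mathbb Z$-basis of $\Char(G)$ and $M(G)=\bigoplus_{i=1}^{r}\mathbb N\sigma_i$ is a free monoid. I would first record three elementary facts. (i) Every linear character of $G$ is indecomposable in $\Ch(G)$ and monomial, hence lies in $\Hilb(M(G))$; after reindexing, $\chi_1,\dots,\chi_k$ are among the $\sigma_i$, with $k=|G/[G,G]|$. (ii) A free monoid is saturated: if $n\chi\in M(G)$ with $n\ge 1$ and $\chi\in\Char(G)$, expand $\chi$ in the basis $(\sigma_i)$ and compare coefficients to see $\chi\in M(G)$. (iii) Hence $M(G)=\Char(G)\cap(\mathbb R_{\ge 0}M(G))$, where $\mathbb R_{\ge 0}M(G)\subseteq\mathbb R^{r}$ is the cone generated by $M(G)$; consequently $G$ is monomial (that is, $M(G)=\Ch(G)$) if and only if $\mathbb R_{\ge 0}M(G)=\mathbb R_{\ge 0}^{r}$; equivalently, every $\chi\in\Irr(G)$ is a non-negative rational combination of monomial characters; equivalently, the matrix $A$ (which has non-negative integer entries and $\det A=\pm 1$) is a permutation matrix; equivalently, each of the $r$ facets of the simplicial cone $\mathbb R_{\ge 0}M(G)$ lies in a coordinate hyperplane $\{\chi_j=0\}$.

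Next I would run an induction on $|G|$. By Theorem~$2.10$, $R_{G_1\times G_2}=R_{G_1}\otimes_K R_{G_2}$ (equivalently $M(G_1\times G_2)\cong M(G_1)\times M(G_2)$), so $I_{G_1\times G_2}=\{0\}$ precisely when $I_{G_1}=I_{G_2}=\{0\}$, and we may assume $G$ directly indecomposable. By Proposition~$2.9$, $M(G/N)=M(G)\cap(\mathbb N^{s}\times\{0\}^{r-s})$ for $N\unlhd G$, where $N\subseteq\Ker(\chi_j)$ iff $1\le j\le s$; when $M(G)$ is free this intersection equals the face $\bigoplus_{i\in I_0}\mathbb N\sigma_i$ with $I_0=\{i:a_{ij}=0\text{ for all }j>s\}$ (using $a_{ij}\ge 0$ and $c_i\ge 0$, so $\sum_i c_i a_{ij}=0$ forces $c_i=0$ whenever $a_{ij}>0$), hence $M(G/N)$ is again free, $I_{G/N}=\{0\}$, and $G/N$ is monomial by induction. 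So we may also assume that every proper quotient of $G$ is monomial; in particular $[G,G]$ is the unique minimal normal subgroup of $G$ if $G$ itself is not monomial.

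For the heart of the matter I would invoke the Aramata--Brauer theorem and the refinement recalled in the Remark: for each $j$ there is $\alpha_j\ge 1$ with $\alpha_j(\Reg(G)-\chi_j)\in M(G)$, while $\Reg(G)\in M(G)$ since it is monomial (induced from the trivial character of the trivial subgroup); by saturation, $\Reg(G)-\chi_j\in M(G)$ for all $j$. I would then try to prove, by induction on $d=\sigma(1)$, that every $\sigma\in\Hilb(M(G))$ is irreducible, the degree-$1$ part of $\Hilb(M(G))$ being exactly $\{\chi_1,\dots,\chi_k\}$. Such a $\sigma$ is a primitive lattice vector: if $\sigma=m\tau$ with $m\ge 2$ and $\tau\in\Char(G)$, then $m\tau\in M(G)$ gives $\tau\in M(G)$ by saturation, whence $\sigma=\tau+(m-1)\tau$ decomposes in $M(G)$ — impossible. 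Now suppose $\sigma=\mu+\nu$ with $\mu,\nu$ nonzero characters: if both lay in $\mathbb R_{\ge 0}M(G)$, extremality of the ray $\mathbb R_{\ge 0}\sigma$ in the simplicial cone would give $\mu=t\sigma$ with $t\in[0,1]$, and primitivity of $\sigma$ would force $t\in\{0,1\}$, contradicting $\mu,\nu\neq 0$. Hence some constituent of $\sigma$, or some partial sum of its constituents, lies outside $\mathbb R_{\ge 0}M(G)$. To finish, one must exclude this: writing $\sigma=\lambda^{G}$ with $\lambda$ a linear character of a subgroup $H$ taken maximal with this property, I would use induction in stages $\sigma=(\lambda^{K})^{G}$, Clifford theory relative to a normal subgroup contained in $H$, and the inductive hypothesis on proper quotients, to force every constituent of $\sigma$ to be monomial and hence equal to $\sigma$.

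The step I expect to be the main obstacle is exactly this last one: controlling the constituents of a minimal-degree generator of $\Hilb(M(G))$, equivalently ruling out that a simplicial $\mathbb R_{\ge 0}M(G)$ is a proper subcone of $\mathbb R_{\ge 0}^{r}$. The phenomenon to be excluded occurs for $G=SL(2,3)$, whose three degree-$2$ irreducibles $2_a,2_b,2_c$ are not monomial while $2_a+2_b$ and $2_a+2_b+2_c$ are indecomposable in $M(G)$ with non-monomial constituents, and no positive multiple of $2_a$ lies in $M(G)$; there $\mathbb R_{\ge 0}M(G)$ is a proper subcone of $\mathbb R_{\ge 0}^{7}$ and $M(G)$ is not free, as the conjecture predicts. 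But I see no soft reason why the facet normals of a simplicial $\mathbb R_{\ge 0}M(G)$ must be coordinate functionals; a proof seems to require a genuinely new ingredient tying the extremal structure of this cone to induced characters — for instance, showing that simpliciality of $\mathbb R_{\ge 0}M(G)$ already forces some positive multiple of each irreducible character of $G$ to be a sum of monomial characters, which by (ii) and (iii) would close the argument at once.
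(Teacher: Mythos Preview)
This statement is labelled a \emph{Conjecture} in the paper, and the paper does not prove it. The only result in that direction is Proposition~\ref{patru}, which establishes the conclusion under the two additional hypotheses that $G$ is almost monomial and $r=|\Irr(G)|\le 4$; the subsequent Remark shows that the purely linear-algebraic method used there (analysing the matrix $A$ with $\det A=\pm 1$) cannot work for $r\ge 5$, by exhibiting a $5\times 5$ nonnegative integer matrix with determinant $1$ that satisfies the almost-monomial combinatorial condition yet is not a permutation matrix. So there is no ``paper's own proof'' to compare your attempt against.

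Your proposal is not a proof either, and to your credit you say so explicitly in the last paragraph. The preliminary material is correct: the equivalence of $I_G=\{0\}$ with $p=r$ and with $M(G)$ being free on $\sigma_1,\dots,\sigma_r$; the saturation of a free monoid; the fact that $\Reg(G)-\chi_j\in M(G)$ then follows from Rhoades' theorem; the stability of $I_G=\{0\}$ under quotients (via Proposition~2.9) and direct factors (via Theorem~2.10). These reductions are sound and may be useful.

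The gap is exactly where you locate it. Your extremality-plus-primitivity argument shows that a generator $\sigma_i$ cannot split as $\mu+\nu$ with both $\mu,\nu$ characters lying in the cone $\mathbb R_{\ge 0}M(G)$. But nothing you have written excludes a splitting with $\mu,\nu\in\Ch_+(G)\setminus\mathbb R_{\ge 0}M(G)$: that is precisely the scenario in which $\mathbb R_{\ge 0}M(G)$ is a proper simplicial subcone of $\mathbb R_{\ge 0}^{\,r}$. The sentence invoking ``induction in stages, Clifford theory relative to a normal subgroup contained in $H$, and the inductive hypothesis on proper quotients'' is a strategy, not an argument; no mechanism is given that forces the constituents of $\lambda^G$ to be monomial, and the reduction to ``$[G,G]$ is the unique minimal normal subgroup'' does not by itself supply one. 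As things stand, the conjecture remains open.
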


For a character $\psi$ of $G$, we denote $\Cons(\psi)$ the set of constituents of $\psi$.
We recall the following definition, which generalize quasi-monomial groups:

\begin{dfn}(\cite{monat})\label{am-def}
A finite group $G$ is called {\em almost monomial} (or AM-group) if for every two distinct characters $\chi \neq \psi \in \Irr(G)$, there exist a 
subgroup $H$ of $G$ and a linear character $\lambda$ of $H$ such $\chi\in \Cons(\lambda^G)$ and $\psi\notin \Cons(\lambda^G)$.
\end{dfn}

Regarding Conjecture \ref{conj1}, we are able to prove the following result:

\begin{prop}\label{patru}
Let $G$ be an almost monomial group with $r=|\Irr(G)|\leq 4$ and $I_G=(0)$. Then $G$ is monomial.
\end{prop}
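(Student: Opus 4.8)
The plan is to work through the possible values of $r=|\Irr(G)|$ one at a time, using the hypothesis $I_G=(0)$ to force $\Hilb(M(G))=\{\sigma_1,\ldots,\sigma_r\}$ with each $\sigma_i$ a single monomial character, and then to use almost monomiality to pin down exactly which monomial characters these can be. The key structural input is that $I_G=(0)$ is equivalent to $p=r$, and by the discussion around \eqref{cur} this is in turn equivalent to the existence of $B\in\mathcal M_{r,r}(\mathbb Z)$ with $A\cdot B=I_r$, i.e. $\det(A)\in\{-1,+1\}$; since the entries $a_{ij}$ are nonnegative integers and $u_1=x_1\in R_G$ (so one row is a standard basis vector, after assuming $\chi_1=1_G$), this is already a strong restriction. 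For $r=1$ every group is monomial, so there is nothing to prove; for $r=2$ the matrix $A$ is $2\times 2$ with nonnegative integer entries, one row equal to $(1,0)$, determinant $\pm1$, and the second row a monomial character containing $\chi_2$; a direct inspection shows $A=I_2$, giving $\Hilb(M(G))=\Irr(G)$, hence $G$ monomial by Proposition~\ref{p21}(1).

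For $r=3$ and $r=4$ I would combine the determinant constraint with the almost-monomial characterization to be proved in Proposition~$2.5$: roughly, almost monomiality says that for each irreducible $\chi_j$ there is a generator $\sigma_i\in\Hilb(M(G))$ whose constituent set contains $\chi_j$ but not some prescribed other $\chi_k$; translated to the matrix $A$, this says the support pattern of the rows of $A$ must be rich enough to "separate" the columns. A square nonnegative integer matrix with $\det A=\pm1$, containing the row $e_1=(1,0,\ldots,0)$, and whose rows separate coordinates in this sense, can be shown by a short case analysis to be a permutation matrix — equivalently each $\sigma_i$ is itself an irreducible character. Once $\Hilb(M(G))\subseteq\Irr(G)$, the fact that $\Hilb(M(G))$ must span $\Char(G)$ as a group (Brauer) forces $\Hilb(M(G))=\Irr(G)$, and Proposition~\ref{p21}(1) finishes it.

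The main obstacle is the combinatorial step for $r=4$: a $4\times 4$ nonnegative integer matrix with determinant $\pm1$ has many more shapes than in the $r\le 3$ cases (for instance unipotent-type matrices with off-diagonal entries), so ruling out every non-permutation possibility requires care. The leverage that makes this tractable is twofold. First, every row of $A$ is a genuine monomial character, so its entries are the multiplicities $\langle\lambda^G,\chi_j\rangle$ and in particular their weighted sum $\sum_j a_{ij}\chi_j(1)=\lambda^G(1)=[G:H]$ is an integer dividing $|G|$, which constrains the rows further. Second, minimality of the Hilbert basis means no $\sigma_i$ is a sum of other elements of $M(G)$; in matrix terms no row of $A$ is a nonnegative integer combination of the others, which together with $\det A=\pm 1$ and almost monomiality should collapse the list to the standard basis. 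I would organize this as: (i) reduce to $\det A=\pm1$, $a_{ij}\ge 0$, row $e_1$ present; (ii) apply the Proposition~$2.5$ separation condition to bound the number of nonzero entries per column from below; (iii) use minimality of the Hilbert basis to bound entries from above; (iv) conclude $A$ is a permutation matrix, hence after reindexing $A=I_r$, hence $R_G=S$ and $G$ is monomial.
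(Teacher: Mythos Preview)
Your plan is correct and matches the paper's approach: both reduce to showing that a nonnegative integer $r\times r$ matrix $A$ with first row $e_1$, $\det A=\pm 1$, and satisfying the almost-monomial separation condition (for all $j\neq k$ there is a row $i$ with $a_{ij}>0$, $a_{ik}=0$) must be a permutation matrix, handled by case analysis on $r\le 4$. The paper executes this explicitly, with the $r=4$ case splitting into two subcases according to whether the separation condition for the pair $(2,4)$ is witnessed by $u_4$ or by $u_2$.

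Two minor remarks. The auxiliary tools you list in your step (iii) --- the index divisibility $\sum_j a_{ij}\chi_j(1)\mid |G|$ and Hilbert-basis minimality --- are valid observations but turn out to be unnecessary: the determinant constraint together with the separation condition alone suffices, and the paper never invokes either of these. Also, the remark immediately following the proposition in the paper shows that the purely combinatorial approach breaks down at $r=5$ (an explicit $5\times 5$ matrix with $\det A=1$, nonnegative entries, first row $e_1$, satisfying separation, but not a permutation matrix is exhibited), so your step (iv) cannot be pushed further without genuinely group-theoretic input.
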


\begin{proof}
Assume that $R_G=K[u_1,\ldots,u_r]$ and $u_i=\prod_{j=1}^r x_j^{a_{ij}}$, $1\leq i\leq r$.
Let $A=(a_{ij})_{\substack{i=\overline{1,r} \\ j=\overline{1,r}}}$. We note that $G$ is monomial
if and only if $A$ is a permutation matrix. We assume that $\chi_1$ is the trivial character of $G$, 
and thus $a_{11}=1$ and $a_{1j}=0$ for $2\leq j\leq r$.
\begin{enumerate}
\item[(i)] $r=1$. This case is trivial.
\item[(ii)] $r=2$. We have $A=\begin{pmatrix} 1 & a_{12} \\ 0 & a_{22}  \end{pmatrix}$. 
     For $2\neq 1$, there exists $i\in\{1,2\}$, such that $x_2|u_i$ and $x_1\nmid u_i$. 
     Since $u_1=x_1$, it follows that $x_1\nmid u_2$, hence $a_{12}=0$ and $a_{22}>0$. Since $\det(A)=\pm 1$ and $a_{22}>0$, it follows that $a_{22}=1$.
     Therefore $A=I_2=\begin{pmatrix} 1 & 0 \\ 0 & 1  \end{pmatrix}$ is a permutation matrix. 
\item[(iii)] $r=3$. We have $A=\begin{pmatrix} 1 & a_{12} & a_{13} \\ 0 & a_{22} & a_{23} \\ 0 & a_{32} & a_{33} \end{pmatrix}$.
      Up to a renumerotation, we may assume that $x_2|u_2$, $x_3\nmid u_2$, $x_3|u_3$ and $x_2\nmid u_3$. It follows that
      $a_{31}=a_{23}=0$ and $\det(A)=\begin{vmatrix} 1 & a_{12} & a_{13} \\ 0 & a_{22} & 0 \\ 0 & 0 & a_{33} \end{vmatrix} = a_{22}a_{33}=1$,
      hence $a_{22}=a_{33}=1$. So $u_1=x_1$, $u_2=x_1^{a_{12}}x_2$, $u_3=x_1^{a_{13}}x_3$.
      If $a_{12}>0$, then there is no $1\leq i\leq 3$ such that $x_2|u_1$ and $x_1\nmid u_1$, a contradiction. Similarly, the assumption $a_{13}>0$
      leads to a contradiction. It follows that $A=I_3$.
\item[(iv)] $r=4$. Up to a renumerotation, we may assume that $x_2|u_2$, $x_3\nmid u_2$, $x_3|u_3$ and $x_2\nmid u_3$. This implies $a_{22}>0$, $a_{23}=0$, $a_{32}=0$ and $a_{33}>0$.
     Since $x_2\nmid u_1$ and $x_2\nmid u_3$, it follows that
     either (a)  $x_2|u_4$ and $x_4\nmid u_4$ or (b) $x_4\nmid u_2$.
\begin{enumerate}
 \item[(a)] We have that \small
     \begin{equation}\label{curv}
     \det(A)=\begin{vmatrix} 1 & a_{12} & a_{13} & a_{14} \\ 0 & a_{22} & 0 & a_{24} \\ 0 & 0 & a_{33} & a_{34} \\ 0 & a_{42} & a_{43} & 0 \end{vmatrix}=
     -a_{24}a_{33}a_{42}-a_{34}a_{43}a_{22}=\pm 1,\end{equation}
     \normalsize
     If $a_{42}=0$, then there is no $u_i$ such that $x_4|u_i$ and $x_2\nmid u_i$, a contradiction, hence $a_{42}\neq 0$. Similarly, we have $a_{43}\neq 0$.
     Since $a_{22}>0$ and $a_{42}>0$, we must have $a_{24}>0$, otherwise there is no $u_i$ with $x_2|u_i$ and $x_4\nmid u_i$. From \eqref{curv} it follows that $a_{24}=a_{33}=a_{42}=1$
     and $a_{34}=0$, hence $A=\begin{pmatrix} 1 & a_{12} & a_{13} & a_{14} \\ 0 & a_{22} & 0 & 1 \\ 0 & 0 & 1 & 0 \\ 0 & a_{42} & a_{43} & 0 \end{pmatrix}$. Since $a_{43}>0$, there is
     no $u_i$ such that $x_3|u_i$ and $x_4\nmid u_i$, a contradiction.

\item[(b)] Since $a_{42}=0$, we have that  $A=\begin{pmatrix} 1 & a_{12} & a_{13} & a_{14} \\ 0 & a_{22} & 0 & a_{24} \\ 0 & 0 & a_{33} & a_{34} \\ 0 & 0 & a_{43} & a_{44} \end{pmatrix}$.
     Note that $u_3$ is the only possible choice for $x_3|u_3$ and $x_4\nmid u_4$, therefore $a_{43}=0$. Since $x_4\nmid u_i$ for $i\neq 3$, it follows that $x_4|u_4$ and $x_3\nmid u_4$,
     therefore $a_{34}=0$. It follows that $\det(A)=a_{22}a_{33}a_{44}=\pm 1$, hence $a_{22}=a_{33}=a_{44}=1$. We have  
     $A=\begin{pmatrix} 1 & a_{12} & a_{13} & a_{14} \\ 0 & 1 & 0 & a_{24} \\ 0 & 0 & 1 & a_{34} \\ 0 & 0 & 0 & 1 \end{pmatrix}$. Now, since for any $4\neq j$ there exists $u_i$ such
     that $x_4|u_i$ and $x_j\nmid u_i$, it follows that $a_{14}=a_{24}=a_{34}=0$. Similarly, $a_{12}=a_{13}=0$. Thus $A=I_4$.
\end{enumerate}
\end{enumerate}
\end{proof}

\begin{obs}
\emph{
The conclusion of Proposition \ref{patru} can be also obtained from the classification of finite groups according
to the number of conjugacy classes, $r=r(G)\leq 4$, see \cite[TABLE 1]{cc}.  
The method used in the proof of the Proposition \ref{patru} does not work for $r\geq 5$. For instance, 
the matrix $A=\begin{pmatrix} 1&0&0&0&0 \\ 0&2&0&1&0\\ 0&0&1&0&1\\ 0&1&1&0&0\\0&0&0&1&1 \end{pmatrix}$ has
the determinant $\det(A)=1$. Also, if we let $u_1=x_1$, $u_2=x_2^2x_4$, $u_3=x_3x_4$, $u_4=x_2x_5$ 
and $u_5=x_3x_5$, then it is easy to check that for any $j\neq k$, there exist $i$ such that $x_j|u_i$
and $x_k\nmid u_i$.}
\end{obs}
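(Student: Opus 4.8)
The statement is a remark whose content is a reference together with the verification of one explicit $5\times5$ example, so the plan is verificational rather than conceptual. For the first assertion I would simply invoke the classification of finite groups by number of conjugacy classes in \cite[TABLE 1]{cc}: that table exhibits every group with $r=r(G)\leq 4$, and each such group is monomial, so Proposition \ref{patru} follows a posteriori from inspecting this finite list. The substance of the remark is the second assertion, where I must check three things about the displayed data: that $\det(A)=1$, which since $p=r=5$ is, by the remark following \eqref{cur}, equivalent to $I_G=(0)$ (factoriality of the monoid); that the monomials satisfy the separation condition used throughout the proof of Proposition \ref{patru} --- for every $j\neq k$ some generator is divisible by $x_j$ but not by $x_k$ ---; and that $A$ is nevertheless not a permutation matrix, the last being immediate since $u_2=x_2^2x_4$.

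For the determinant I would expand along the first row, where only the $(1,1)$-entry is nonzero, reducing to the $4\times4$ minor on rows and columns $2,\dots,5$; a cofactor expansion of that minor along its first column (entries $2,0,1,0$) leaves two $3\times3$ determinants evaluating to $+1$ and $-1$, so that the minor equals $2\cdot1+1\cdot(-1)=1$ and hence $\det(A)=1$.

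For the separation condition I would first reformulate it so the check becomes mechanical. Writing $T_j:=\{\,i : x_j\mid u_i\,\}$ for the set of generators divisible by $x_j$, the requirement ``for every $j\neq k$ there is $i$ with $x_j\mid u_i$ and $x_k\nmid u_i$'' is precisely $T_j\not\subseteq T_k$ for all $j\neq k$; equivalently, the condition holds if and only if $\{T_1,\dots,T_5\}$ is an antichain in the Boolean lattice of subsets of $\{1,\dots,5\}$. Reading off the exponents one gets $T_1=\{1\}$, $T_2=\{2,4\}$, $T_3=\{3,5\}$, $T_4=\{2,3\}$ and $T_5=\{4,5\}$, and I would then verify pairwise incomparability: the singleton $T_1$ is incomparable to every two-element set, while among the four two-element sets each pair is either disjoint (as for $\{2,4\}$ and $\{3,5\}$) or meets in a single element (as for $\{2,4\}$ and $\{2,3\}$), so no containment can occur.

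Combining the three checks, $A$ satisfies $\det(A)=\pm1$ together with the separation condition yet fails to be a permutation matrix, so none of the branches of the case analysis in Proposition \ref{patru} --- each of which ended by forcing a permutation matrix --- can be reproduced when $r=5$, which is exactly what the remark asserts. I do not anticipate a real obstacle: the only delicate point is bookkeeping, and the antichain reformulation removes it by replacing the $5\cdot4=20$ ordered-pair verifications with the comparison of five small sets. I would finally note that the $u_i$ listed in the remark and the rows of $A$ determine slightly different matrices; since both have determinant $1$ and both yield an antichain of sets $T_j$, either reading supports the stated conclusion.
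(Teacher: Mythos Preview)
Your verification is correct and is exactly what the paper leaves implicit when it says ``it is easy to check'': the remark carries no proof in the paper, so you are supplying the routine details the authors omit. The antichain reformulation of the separation condition via the sets $T_j=\{i:x_j\mid u_i\}$ is a clean way to organize the $20$ ordered-pair checks, and your determinant computation is accurate. You also correctly spotted that the listed monomials $u_3,u_4,u_5$ do not match rows $3,4,5$ of the displayed matrix $A$; this is a typo in the paper, and your observation that both readings give determinant $1$ and an antichain $\{T_j\}$ is the right way to dispose of it.
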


For a monomial $u\in K[x_1,\ldots,x_r]$, we denote $\supp(u)=\{x_i\;:\;x_i|u\}$, the \emph{support} of $u$.

\begin{prop}\label{25}
Let $G$ be a finite group with $\Irr(G)=\{\chi_1,\ldots,\chi_r\}$ and $R_G=K[u_1,\ldots,u_p]$. The following are equivalent:
\begin{enumerate}
 \item[(1)] $G$ is almost monomial.
 \item[(2)] For any $j\neq k \in \{1,\ldots,r\}$, there exists $i\in\{1,\ldots,m\}$ such that $x_j| u_i$ and $x_k\nmid u_i$.
 \item[(3)] For any $k\in \{1,\ldots,r\}$, there exists a subset $I_j\subset \{1,2,\ldots,r\}$ such that 
       $$\supp(\prod_{i\in I_j}u_i) = \{x_1,\ldots,x_{k-1},x_{k+1},\ldots,x_r\}.$$
\end{enumerate}
\end{prop}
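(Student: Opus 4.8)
The plan is to move everything into the language of character constituents via the evident dictionary
$x_j \mid u_i \iff a_{ij}>0 \iff \chi_j\in\Cons(\sigma_i)$, together with the two elementary facts that $\supp(uv)=\supp(u)\cup\supp(v)$ for monomials $u,v$ and, correspondingly, $\Cons(\psi+\psi')=\Cons(\psi)\cup\Cons(\psi')$ for characters $\psi,\psi'$. I would also record at the outset the two structural facts already available: $\Hilb(M(G))=\{\sigma_1,\dots,\sigma_p\}\subseteq\Mon(G)$, so each $\sigma_i$ is an honest induced character $\lambda_i^G$ of a linear character $\lambda_i$ of a subgroup $H_i\leqslant G$; and every $\psi\in M(G)$ is a finite sum, with repetitions, of elements of $\Hilb(M(G))$. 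With this in hand I would prove $(1)\Leftrightarrow(2)$ and $(2)\Leftrightarrow(3)$ separately.

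For $(1)\Rightarrow(2)$, fix $j\neq k$. Almost monomiality produces a monomial character $\psi=\lambda^G$ with $\chi_j\in\Cons(\psi)$ and $\chi_k\notin\Cons(\psi)$. Writing $\psi=\sigma_{i_1}+\cdots+\sigma_{i_t}$ and using $\Cons(\psi)=\bigcup_{\ell}\Cons(\sigma_{i_\ell})$, some index $i:=i_\ell$ has $\chi_j\in\Cons(\sigma_i)$, while $\chi_k\in\Cons(\sigma_i)$ is impossible since $\chi_k\notin\Cons(\psi)$; translating back gives $x_j\mid u_i$ and $x_k\nmid u_i$. For $(2)\Rightarrow(1)$, fix $\chi_j\neq\chi_k$, choose $i$ with $x_j\mid u_i$, $x_k\nmid u_i$, and note that since $\sigma_i\in\Mon(G)$ we may write $\sigma_i=\lambda^G$; then $\chi_j\in\Cons(\lambda^G)$ and $\chi_k\notin\Cons(\lambda^G)$, which is exactly the defining condition of Definition \ref{am-def}.

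For $(2)\Rightarrow(3)$, fix $k$; for each $j\neq k$ use $(2)$ to pick $i_j$ with $x_j\mid u_{i_j}$ and $x_k\nmid u_{i_j}$, and set $I_k:=\{\,i_j : j\neq k\,\}$. Then $\supp\bigl(\prod_{i\in I_k}u_i\bigr)=\bigcup_{i\in I_k}\supp(u_i)$ contains every $x_j$ with $j\neq k$ and omits $x_k$, hence equals $\{x_1,\dots,x_{k-1},x_{k+1},\dots,x_r\}$. For $(3)\Rightarrow(2)$, fix $j\neq k$ and apply $(3)$ at $k$: since $x_j$ lies in $\supp\bigl(\prod_{i\in I_k}u_i\bigr)$, some $i\in I_k$ has $x_j\mid u_i$, and $x_k$ divides no $u_i$ with $i\in I_k$, so this $i$ witnesses $(2)$.

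There is no genuine obstacle; the proof is essentially a change of vocabulary. The only two points needing care are: in Step $(2)\Rightarrow(1)$ one must invoke that the Hilbert basis consists of \emph{actual} monomial characters (not merely positive virtual characters), so that $\sigma_i=\lambda^G$ really is induced from a linear character; and in statement $(3)$ one should read the witnessing set as indexed by $k$ (i.e.\ $I_k$, a subset of $\{1,\dots,p\}$) rather than by the free symbol $j$.
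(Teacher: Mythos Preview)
Your proof is correct and, for the implications $(1)\Rightarrow(2)$ and $(2)\Rightarrow(3)$, essentially identical to the paper's. The one genuine difference is in how the equivalence is closed: the paper runs the cycle $(1)\Rightarrow(2)\Rightarrow(3)\Rightarrow(1)$ and dispatches $(3)\Rightarrow(1)$ by citing \cite[Proposition~1.3]{cimrad}, whereas you instead prove $(2)\Rightarrow(1)$ and $(3)\Rightarrow(2)$ directly. Your route is self-contained, and the point you flag --- that each $\sigma_i\in\Hilb(M(G))$ already lies in $\Mon(G)$ and hence is literally an induced linear character $\lambda^G$ --- is exactly what makes $(2)\Rightarrow(1)$ immediate without any external input. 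Your closing remark about reading the index set in $(3)$ as $I_k\subseteq\{1,\dots,p\}$ is also apt; the paper's statement has the same typographical slip.
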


\begin{proof}
$(1)\Rightarrow (2)$ By Definition \ref{am-def}, there exists a subgroup $H\leqslant G$ and a linear character $\lambda$ of $H$ such
that $\chi_j \in \Cons(\lambda^G)$ and $\chi_k\notin \Cons(\lambda^G)$. Let $u:=\prod_{i=1}^r x_i^{\langle \lambda^G,\chi_i \rangle}$.
It follows that $x_j|u$ and $x_k\nmid u$. Since $u\in R_G$, there exists $i\in\{1,\ldots,m\}$ such that $u_i|u$ and $x_j|u_i$.
Since $x_k\nmid u$, it follows that $x_k\nmid u_i$.

$(2)\Rightarrow (3)$ For each $j\in \{1,\ldots,r\}\setminus \{k\}$, let $i_j\in\{1,\ldots,m\}$ such that $x_j|u_{i_j}$ and $x_k\nmid u_{i_j}$.
Let $I_j:=\{i_j\;:\; j\in \{1,\ldots,r\}\setminus \{k\}\}$. Then $\supp(\prod_{i\in I_j}u_i) = \{x_1,\ldots,x_{k-1},x_{k+1},\ldots,x_r\}$,
as required.

$(3)\Rightarrow (1)$ Follows immediately from \cite[Proposition 1.3]{cimrad}.
\end{proof}

\begin{exm}\emph{
(1) Let $G:=SL(2,3)$. Then $R_G=K[x_1,x_2,x_3, x_7, x_4x_5, x_4x_6, x_5x_6, x_4x_5x_6]$. Note that $G$ satisfies the condition (2) of Proposition \ref{25}, hence
it is almost monomial. Moreover, we have $R_G\cong K[t_1,\ldots,t_8]/(t_5t_6t_7-t_8^2)$.}

\emph{
(2) Let $G:=GL(2,3)$. Then $R_G=K[x_1,x_2,x_3, x_6,x_7,x_8, x_4x_8, x_5x_8, x_4x_5x_8]$. Note that there is no monomial $u\in R_G$
with $\supp(u)=\{x_1,\ldots,x_7\}$. Hence, by Proposition \ref{25}(3), it follows that $G$ is not almost monomial. 
Moreover, we have $R_G\cong K[t_1,\ldots,t_9]/(t_6t_9-t_7t_8)$.}
\end{exm}

Let $G$ be a finite group. We denote by $\widehat{M(G)}$, the integral closure of $M(G)$ in $\Ch(G)$.
Also, we consider the (normal) monomial algebra $\widehat{R_G}:=K[\widehat{M(G)}]$. Obviously, $R_G\subset \widehat{R_G}$.

\begin{prop}
For any finite group $G$, we have:
$$K[x_1,\; x_2^{d_2}\cdots x_r^{d_r},\; x_1x_2^{d_2-1}x_3^{d_3}\cdots x_r^{d_r},\; \ldots,\; x_1x_2^{d_2}\cdots x_{r-1}^{d_{r-1}}x_r^{d_r-1}] \subset \widehat{R_G}.$$
\end{prop}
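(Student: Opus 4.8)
The plan is to argue entirely at the level of affine monoids. By definition $\widehat{R_G}=K[\widehat{M(G)}]$, where $\widehat{M(G)}$ is the integral closure of $M(G)$ in $\Ch(G)$. Since $M(G)$ generates $\Char(G)\cong\mathbb Z^r$ as a group (Brauer's induction theorem) and $\Ch(G)\cong\mathbb N^r$ is saturated in $\Char(G)$, this integral closure is the saturation
$$\widehat{M(G)}=\{\chi\in\Ch(G)\;:\;m\chi\in M(G)\text{ for some integer }m\geq 1\},$$
a submonoid of $\Ch(G)$ containing $M(G)$. Consequently, a monomial $v=\prod_{j=1}^r x_j^{b_j}$ with exponent vector $\mathbf b=(b_1,\ldots,b_r)\in\mathbb N^r$ belongs to $\widehat{R_G}$ as soon as some positive power $v^m$ belongs to $R_G$.

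Next I match the generators of the left-hand side with monomials of this type. Write $\mathbf d=(d_1,\ldots,d_r)$ for the exponent vector of $u_G=x_1^{d_1}\cdots x_r^{d_r}$; since $\chi_1=1_G$ we have $d_1=1$. Then the monomials listed in the statement are exactly $x_1=x^{\mathbf e_1}$ together with $u_G/x_j=x^{\mathbf d-\mathbf e_j}$ for $j=1,\ldots,r$, where $\mathbf e_j$ denotes the $j$-th standard basis vector. Note that $\mathbf d-\mathbf e_j\in\mathbb N^r$ because $d_j=\chi_j(1)\geq 1$ (for $j=1$ the relevant coordinate is $d_1-1=0$), so each of these is a genuine monomial of $S$.

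For $x_1$: it is the monomial attached to the trivial character $\chi_1\in\Mon(G)$, so $x_1\in R_G\subseteq\widehat{R_G}$. For $u_G/x_j$: equation \eqref{doi} --- a consequence of the Rhoades strengthening of the Aramata--Brauer theorem, already recorded in the Remark above --- provides a positive integer $\alpha_j$ with $(u_G/x_j)^{\alpha_j}\in R_G$, i.e. $\alpha_j(\mathbf d-\mathbf e_j)\in M(G)$; together with $\mathbf d-\mathbf e_j\in\Ch(G)$ this yields $\mathbf d-\mathbf e_j\in\widehat{M(G)}$, that is $u_G/x_j\in\widehat{R_G}$. Finally, $\widehat{R_G}$ is a $K$-subalgebra of $S$ containing $x_1,u_G/x_1,\ldots,u_G/x_r$, hence it contains the $K$-algebra generated by these monomials, which is precisely the left-hand side of the asserted inclusion.

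There is no essential obstacle here: once one grants the description of $\widehat{M(G)}$ as the saturation of $M(G)$ and feeds in \eqref{doi}, the statement becomes a one-line monoid computation, and all the arithmetic content sits in the Aramata--Brauer / Rhoades input, which is given. The only points deserving a moment's care are that $\mathbf d-\mathbf e_j$ is truly a lattice point of the positive orthant --- which is exactly why the normalization $d_1=\chi_1(1)=1$ is harmless --- and that $\widehat{M(G)}$ is closed under addition, so that membership of the individual generators upgrades to membership of the whole subalgebra they generate; both are immediate.
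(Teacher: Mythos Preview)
Your proof is correct and follows essentially the same approach as the paper: both identify the listed generators as $x_1$ together with $u_G/x_j$ for $1\leq j\leq r$, invoke \eqref{doi} to conclude that a positive power of each $u_G/x_j$ lies in $R_G$, and deduce membership in $\widehat{R_G}$ via the saturation description of $\widehat{M(G)}$. Your write-up is more detailed---you make explicit the saturation characterization and the check that $\mathbf d-\mathbf e_j\in\mathbb N^r$---but the underlying argument is the same.
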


\begin{proof}
Note that, according to \eqref{doi},
for any $1\leq j\leq r$, we have:
\begin{equation}\label{doidoi}
u_G/x_j = x_1^{d_1}\cdots x_{j-1}^{d_{j-1}}\cdot x_j^{d_{j}-1}\cdot
   x_{j+1}^{d_{j+1}} \cdots x_{r}^{d_{r}} \in \widehat{R_G}.
\end{equation}
Since $x_1\in R_G \subset \widehat{R_G}$, from \eqref{doidoi} we get the required conclusion.
\end{proof}

A natural question which arise is the following: Is the monoid $M(G)$ normal, for any finite group $G$?
The answer is no, as the following example shows:

\begin{exm}
\emph{
Let $G=A_6$, the alternating group of order $6$. Then:
\begin{align*}
& R_G=K[x_1,\; x_1x_2,\; x_1x_2x_6,\; x_1x_3,\;x_1x_3x_6,\; x_1x_6,\;x_2x_3,\; x_2x_3x_4x_5^2x_6^2x_7^2,\;x_2x_3x_4^2x_5x_6^2x_7^2, \\ 
& \; x_2x_4x_5x_6,\; x_2x_7,\; x_3x_4x_5x_6,\; x_3x_7,\;  x_4x_5x_7^2,\; x_4x_5x_6x_7^2,\; x_7].
\end{align*}
Using Normaliz \cite{normaliz} we compute:
\begin{align*}
& \widehat{R_G}=K[x_1,\; x_1x_2,\; x_1x_2x_6,\; x_1x_3,\;x_1x_3x_6,\; x_1x_6,\;x_2x_3,\; x_2x_3x_4x_5^2x_6^2x_7^2,\;x_2x_3x_4^2x_5x_6^2x_7^2, \\ 
&  \;\underline{x_2x_3x_4x_5x_6},\;  x_2x_4x_5x_6,\; x_2x_7,\; x_3x_4x_5x_6,\; x_3x_7,\;  x_4x_5x_7^2,\; x_4x_5x_6x_7^2,\; x_7].
\end{align*}
Hence, $\widehat{R_G}=R_G[x_2x_3x_4x_5x_6]$ and $R_G\subset \widehat{R_G}$ is strict.
Note that $(x_2x_3x_4x_5x_6)^2\in R_G$. Also, we mention that $G$ is not almost monomial.}
\end{exm}

Let $G$ be a finite group and $N\unlhd G$ a normal subgroup. It is well known that $\Irr(G/N)$ is in bijection with the set 
$$\{\chi\in \Irr(G)\;:\;N\subset \Ker(\chi)\}.$$
For a character $\widetilde \chi \in \Irr(G/N)$,
we denote $\chi$ the corresponding character in $\Irr(G)$, that is $\chi(g):=\widetilde{\chi}(\hat g)$ 
for all $g\in G$, where $\hat g$ is the class of $g$ in $G/N$. 

\begin{teor}
Let $G$ be a finite group and $N\unlhd G$ a normal subgroup. We assume that $\Irr(G)=\{\chi_1,\ldots,\chi_r\}$ and 
$\Irr(G/N)=\{\widetilde{\chi_1},\ldots,\widetilde{\chi_s}\}$, where $s\leq r$. Then:
$$R_{G/N} = R_{G}\cap K[x_1,\ldots,x_s].$$
\end{teor}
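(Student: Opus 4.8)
The plan is to identify both sides of the claimed equality as $K$-algebras generated by monomials and to show that the two monoids of exponent vectors coincide. The key structural fact is the well-known correspondence: $\Irr(G/N)$ is in bijection with $\{\chi \in \Irr(G) : N \subseteq \Ker(\chi)\}$, and by renumbering we have arranged that this set is precisely $\{\chi_1,\ldots,\chi_s\}$. Concretely, inflation gives an injective ring homomorphism $\Char(G/N)\hookrightarrow \Char(G)$ whose image is the subgroup $\mathbb{Z}\chi_1+\cdots+\mathbb{Z}\chi_s$, sending $\Ch(G/N)$ onto $\Ch(G)\cap(\mathbb{N}\chi_1+\cdots+\mathbb{N}\chi_s)$. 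Under the identification of characters with monomials $\chi=\sum a_j\chi_j \leftrightarrow \prod x_j^{a_j}$, this says $K[\Ch(G/N)]$ corresponds to $K[\Ch(G)\cap K[x_1,\ldots,x_s]]$. So the real content is to descend this from the full character monoids to the submonoids $M(G/N)$ and $M(G)$ generated by monomial characters.

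First I would prove the inclusion $R_{G/N}\subseteq R_G\cap K[x_1,\ldots,x_s]$. A monomial character of $G/N$ is $\lambda^{G/N}$ for a linear character $\lambda$ of a subgroup $\overline H\leqslant G/N$; writing $\overline H = H/N$ for $N\leqslant H\leqslant G$ and letting $\lambda$ also denote its inflation to $H$ (a linear character of $H$ with $N\subseteq\Ker\lambda$), one checks that the inflation of $\lambda^{G/N}$ to $G$ equals $\lambda^G$ (this is the standard compatibility of induction with inflation along $N$). Hence every generator of $M(G/N)$, inflated, lies in $\Mon(G)\subseteq M(G)$, and it clearly has support in $\{x_1,\ldots,x_s\}$ since $N$ is in its kernel. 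This gives the monomial generators of $R_{G/N}$ inside $R_G\cap K[x_1,\ldots,x_s]$, hence the algebra inclusion.

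For the reverse inclusion $R_G\cap K[x_1,\ldots,x_s]\subseteq R_{G/N}$: since $R_G$ is a monomial algebra, $R_G\cap K[x_1,\ldots,x_s]$ is spanned by those monomials of $M(G)$ (the monoid, not just the Hilbert basis) whose support avoids $x_{s+1},\ldots,x_r$. Take such a monomial $u=\prod_{j=1}^s x_j^{a_j}$ corresponding to a character $\chi=\sum_{j=1}^s a_j\chi_j\in M(G)$ with $N$ in the kernel of every constituent, hence $N\subseteq\Ker(\chi)$. Write $\chi=\psi_1+\cdots+\psi_m$ with each $\psi_i=\mu_i^G$ monomial. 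The crucial observation is that, because $\chi$ is $N$-invariant (trivial on $N$), we may replace the data $(\psi_i)$ by $N$-trivial monomial characters: indeed, decomposing each $\mu_i^G$ into $N$-trivial and non-$N$-trivial parts and summing, the non-$N$-trivial parts must cancel since their total is $\chi - (\text{$N$-trivial part}) = 0$ on the level of the whole sum. More carefully, the projection $\Char(G)\to \mathbb{Z}\chi_1+\cdots+\mathbb{Z}\chi_s$ onto the $N$-trivial characters is additive and fixes $\chi$; I would argue that this projection maps $M(G)$ into $M(G/N)$ by showing it sends each monomial character $\mu^G$ to a character in $M(G/N)$ — equivalently, by restricting/corestricting along $N$ appropriately — and then $\chi=$ its own projection lies in $M(G/N)$, so $u\in R_{G/N}$.

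The main obstacle is exactly this last point: showing that the ``$N$-trivial part'' of an arbitrary monomial character $\mu^G$ of $G$ lies in $M(G/N)$, i.e. is again a sum of monomial characters of $G/N$. The clean way is: if $\mu$ is a linear character of $H\leqslant G$, then the $N$-trivial constituents of $\mu^G$ are governed by the subgroup $HN$; one shows $\langle \mu^G,\chi\rangle = \langle (\mu|_{H\cap N})\text{-relevant piece}\rangle$ and that the sum over $N$-trivial $\chi$ of $\langle\mu^G,\chi\rangle\chi$ equals $(\tilde\nu)^G$ for a suitable monomial character $\tilde\nu$ of $G/N$ when $\mu$ is trivial on $H\cap N$ — and is $0$ otherwise. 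Since this decomposition-and-cancellation only needs to be run on the whole sum $\chi=\sum\mu_i^G$, where all non-$N$-trivial contributions must vanish, it suffices to handle the terms where each $\mu_i$ is trivial on $H_i\cap N$, for which $\mu_i$ descends to $H_iN/N\leqslant G/N$ and $(\mu_i)^{G/N}$ inflates back to $(\mu_i)^G$; the remaining terms contribute a character all of whose constituents are non-$N$-trivial, yet whose sum with the first group equals the $N$-trivial $\chi$, forcing that remaining character to be $0$. This reduces everything to the inflation compatibility already used in the forward direction, completing the proof.
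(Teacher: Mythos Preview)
Your forward inclusion is fine and matches the paper's. For the reverse inclusion you miss the elementary observation that makes the whole projection apparatus unnecessary, and in attempting to work around it you assert something false.

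The false step: you claim that if $\mu_i$ is a linear character of $H_i$ trivial on $H_i\cap N$, then the inflation of $\tilde\mu_i^{\,G/N}$ (induced from $H_iN/N$) back to $G$ equals $\mu_i^G$. This fails whenever $N\not\leqslant H_i$: the inflation of $\tilde\mu_i^{\,G/N}$ is $(\mu_i')^G$, where $\mu_i'$ is the extension of $\mu_i$ to $H_iN$ trivial on $N$, and this has degree $[G:H_iN]<[G:H_i]$. Concretely, take $G=S_3$, $N=A_3$, $H=\langle(12)\rangle$, $\mu=1_H$: then $H\cap N=\{1\}$ and $HN=G$, so $\tilde\mu^{\,G/N}=1_{G/N}$ inflates to $1_G$, whereas $\mu^G$ is the degree-$3$ permutation character $1_G+\theta$ with $\theta$ the $2$-dimensional irreducible.

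The observation you are missing, and which is exactly the paper's argument, is that characters have nonnegative coefficients. If $\chi=\mu_1^G+\cdots+\mu_m^G$ with each $\mu_i^G$ a genuine character and $\chi$ has no constituent outside $\{\chi_1,\ldots,\chi_s\}$, then neither does any individual $\mu_i^G$. In monomial-algebra language: if a product $u_{i_1}\cdots u_{i_k}$ of the generators of $R_G$ lies in $K[x_1,\ldots,x_s]$, so does each factor. Hence one may reduce at the outset to a single monomial character $\lambda^G$ with $N\subseteq\Ker(\lambda^G)$. Since $\Ker(\lambda^G)=\bigcap_{g\in G}g\,\Ker(\lambda)\,g^{-1}\subseteq H$, this forces $N\leqslant H$ and $N\subseteq\Ker(\lambda)$, so $\lambda$ descends to $H/N$ and the corresponding monomial lies in $R_{G/N}$. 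Your cancellation argument can be repaired to reach the same conclusion (once the ``bad'' terms vanish, nonnegativity also forces each remaining $\mu_i^G$ to be $N$-trivial, whence $N\leqslant H_i$ and the inflation identity becomes true), but as written it stops one step short and leans on the incorrect general inflation identity instead.
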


\begin{proof}
The inclusion $R_{G/N}\subset K[x_1,\ldots,x_s]$ is clear. 
If $H/N \leqslant G/N$ and $\widetilde \lambda$ is a linear character of $H/N$,
then $\widetilde \lambda^{G/N} = \widetilde{\lambda^G}$. It follows that $R_{G/N}\subset R_G$.

For the converse inclusion, let $u\in R_{G}\cap K[x_1,\ldots,x_s]$. We may assume that there exists
a subgroup $H\leqslant G$ and a linear character $\lambda$ of $H$ such that $u=x_1^{a_1}\cdots x_s^{a_s}$,
where $\lambda^G=a_1\chi_1+\cdots+a_s\chi_s$. Since $N\subset \Ker(\chi_i)$ for any $1\leq i\leq s$,
it follows that $N\subset \Ker(\lambda^G)$ and, therefore, $N\subset H$.
\end{proof}

\begin{teor}\label{p210}
 Let $G_1,G_2$ be two finite groups and assume that $R_{G_1}=K[u_1,\ldots,u_m]\subset K[x_1,\ldots,x_r]$ and
$R_{G_2}=K[v_1,\ldots,v_t]\subset K[y_1,\ldots,y_s]$. Then:
$$R_{G_1\times G_2}=K[u_iv_j:\;:1\leq i\leq m,1\leq j\leq t]\subset K[x_iy_j\;:\;1\leq i\leq m,1\leq j\leq t].$$
In other words, $R_{G_1\times G_2}=R_{G_1}\otimes_K R_{G_2}$.
\end{teor}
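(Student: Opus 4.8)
The plan is to realise both sides as affine monoid algebras inside the polynomial ring $S_{G_1\times G_2}$ in the $rs$ indeterminates $z_{kl}$ indexed by $\Irr(G_1\times G_2)=\{\chi_k\otimes\psi_l\}$, where $(\chi\otimes\psi)(g_1,g_2)=\chi(g_1)\psi(g_2)$, and then to compare the underlying monoids. Under the ring isomorphism $\Char(G_1\times G_2)\cong\Char(G_1)\otimes_{\mathbb Z}\Char(G_2)$, a monomial $u=\prod_k x_k^{a_k}\leftrightarrow\sigma=\sum_k a_k\chi_k$ and $v=\prod_l y_l^{b_l}\leftrightarrow\tau=\sum_l b_l\psi_l$ correspond to $\sigma\otimes\tau=\sum_{k,l}a_kb_l\,\chi_k\otimes\psi_l\leftrightarrow\prod_{k,l}z_{kl}^{a_kb_l}$; this last monomial is what ``$uv$'' abbreviates, and in particular $K[x_ky_l:1\le k\le r,\,1\le l\le s]=S_{G_1\times G_2}$. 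Since $(\sigma,\tau)\mapsto\sigma\otimes\tau$ is biadditive and $M(G_1)$, $M(G_2)$ are generated by their Hilbert bases $\sigma_1,\dots,\sigma_m$ and $\tau_1,\dots,\tau_t$ (which lie in $\Mon(G_1)$, $\Mon(G_2)$), the submonoid $\mathcal M\subseteq\Ch(G_1\times G_2)$ generated by all products $\sigma\otimes\tau$ with $\sigma\in\Mon(G_1)$, $\tau\in\Mon(G_2)$ is already generated by the finitely many $\sigma_i\otimes\tau_j$, so $K[u_iv_j:1\le i\le m,\,1\le j\le t]=K[\mathcal M]$. Thus the theorem is precisely the identity $M(G_1\times G_2)=\mathcal M$.

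I would first establish the easy inclusion $\mathcal M\subseteq M(G_1\times G_2)$. If $\sigma_i=\lambda_1^{G_1}$ and $\tau_j=\lambda_2^{G_2}$ with $\lambda_1,\lambda_2$ linear, then the standard identity $\Ind_{H_1\times H_2}^{G_1\times G_2}(\alpha\otimes\beta)=\Ind_{H_1}^{G_1}(\alpha)\otimes\Ind_{H_2}^{G_2}(\beta)$ gives $\sigma_i\otimes\tau_j=(\lambda_1\otimes\lambda_2)^{G_1\times G_2}\in\Mon(G_1\times G_2)\subseteq M(G_1\times G_2)$, whence $K[u_iv_j]\subseteq R_{G_1\times G_2}$.

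The content is the reverse inclusion: every monomial character $\theta=\lambda^{G_1\times G_2}$ (with $H\le G_1\times G_2$ and $\lambda$ a linear character of $H$) must be shown to lie in $\mathcal M$, i.e. to be a non-negative integer combination of products $\mu^{G_1}\otimes\nu^{G_2}$ of monomial characters. I would induct on $|G_1\times G_2|$. Put $H_i=p_i(H)$, so $H\le H_1\times H_2$ and, by transitivity of induction, $\theta=\bigl(\Ind_H^{H_1\times H_2}\lambda\bigr)^{G_1\times G_2}$. If $(H_1,H_2)\ne(G_1,G_2)$, the inductive hypothesis applied inside $H_1\times H_2$ writes $\Ind_H^{H_1\times H_2}\lambda=\sum_a c_a(\mu_a\otimes\nu_a)$ with $\mu_a\in\Mon(H_1)$, $\nu_a\in\Mon(H_2)$, $c_a\in\mathbb N$; inducing up and using that an induced monomial character is monomial yields $\theta=\sum_a c_a\,\mu_a^{G_1}\otimes\nu_a^{G_2}\in\mathcal M$. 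It remains to treat the subdirect case $p_1(H)=G_1$, $p_2(H)=G_2$. Here Goursat's lemma supplies normal subgroups $N_i=H\cap G_i\unlhd G_i$ with $N_1\times N_2\unlhd H$ and $H/(N_1\times N_2)$ the graph of an isomorphism $G_1/N_1\cong G_2/N_2$; restricting $\lambda$ to $N_1\times N_2$ gives $\lambda_1\otimes\lambda_2$ with each $\lambda_i$ a $G_i$-invariant linear character of $N_i$, and one would try to decompose $\theta$ via Clifford theory relative to $N_1\times N_2\unlhd G_1\times G_2$ (or via the Mackey formula) into a sum of products of monomial characters.

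This subdirect ``twisted-diagonal'' case is the genuine obstacle, and it is exactly here that I would scrutinise the statement, because it already breaks in its simplest instance. Let $G_1=G_2=G$, $N_1=N_2=1$, $H=\{(g,g):g\in G\}$, $\lambda=1$; then $\theta=\Ind_H^{G\times G}(1)=\sum_{\chi\in\Irr(G)}\chi\otimes\overline\chi$, whose monomial $\prod_\chi z_{\chi,\overline\chi}$, written as a matrix in $\mathbb N^{r\times r}$ with rows indexed by $\Irr(G)$, has every row equal to a single standard basis vector $e_{\overline\chi}$. But every element of $\mathcal M$ has all its rows in $M(G)$ — a row of $\sum_a\sigma^{(a)}\otimes\tau^{(a)}$ is $\sum_a(\sigma^{(a)})_k\,\tau^{(a)}\in M(G)$ — whereas $e_{\overline\chi}$ lies in $M(G)$ only when $\overline\chi$, equivalently $\chi$, is itself monomial (an irreducible character that is a sum of monomial characters must equal one of them). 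Hence $\theta\notin\mathcal M$ as soon as $G$ fails to be a monomial group, so $M(G_1\times G_2)=\mathcal M$ does not hold in general; $G=SL(2,3)$ already gives a counterexample. I would therefore either impose a hypothesis forcing such subdirect inductions back into $\mathcal M$ (for instance $G_1$ or $G_2$ monomial, so that one entire copy of $\Irr$ consists of monomial characters and all twisted-diagonal characters become sums of products), or weaken the conclusion to the inclusion $K[u_iv_j]\subseteq R_{G_1\times G_2}$ proved in the second step.
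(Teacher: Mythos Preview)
Your analysis is correct, and what it reveals is an error in the paper's theorem rather than a gap in your reasoning. The paper's proof asserts that every subgroup $H\le G_1\times G_2$ splits as $H=H_1\times H_2$ with $H_i\le G_i$ and $\lambda=\lambda_1\times\lambda_2$; this is simply false, and the diagonal $\Delta\le G\times G$ is the canonical counterexample. Your computation $\Ind_{\Delta}^{G\times G}(1)=\sum_{\chi\in\Irr(G)}\chi\otimes\overline{\chi}$ is right, and the ``row'' argument is airtight: for any element of $\mathcal M$ the $k$-th row $\sum_a(\sigma^{(a)})_k\,\tau^{(a)}$ lies in $M(G_2)$, whereas the row of this $\theta$ indexed by a non-monomial $\chi$ is the single irreducible $\overline{\chi}$, and an irreducible lies in $M(G)$ only if it is itself monomial. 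Taking $G=SL(2,3)$, whose three $2$-dimensional irreducibles are non-monomial (as recorded in the paper's own Example~2.8(1)), gives $\theta\in\Mon(G\times G)\setminus\mathcal M$, so $R_{G\times G}\supsetneq R_G\otimes_K R_G$ and the theorem fails as stated.

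The inclusion $R_{G_1}\otimes_K R_{G_2}\subseteq R_{G_1\times G_2}$ you prove in your second paragraph is correct and is exactly the direction of the paper's argument that survives. Your inductive reduction via $H\le p_1(H)\times p_2(H)$ and transitivity of induction is also the right way to isolate the subdirect case as the sole obstruction. I would be cautious about the proposed fixes: ``one factor monomial'' disposes of the diagonal example but does not obviously handle an arbitrary subdirect $H$ without further Mackey/Clifford work, so if you pursue a corrected statement you should either carry that argument through or impose a hypothesis (such as $\gcd(|G_1|,|G_2|)=1$, or no common nontrivial quotient) that forces every subdirect subgroup to be the full product.
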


\begin{proof}
 It suffices to prove that $\Mon(G_1\times G_2)=\Mon(G_1)\times\Mon(G_2)$. Let $\chi\in\Mon(G_1\times G_2)$. Then
there exist $H\leqslant G$ and $\lambda\in \Lin(H)$ such that $\lambda^G=\chi$. On the other hand $H=H_1\times H_2$
and $\lambda=\lambda_1\times \lambda_2$, where $H_i\leqslant G_i$, $\lambda_i\in\Lin(H_i)$, $i=1,2$. Let $\chi_1=\lambda_1^{G_1}$
and $\chi_2=\lambda_2^{G_2}$. It follows that $\lambda^G=\lambda_1^{G_1}\times \lambda_2^{G_2}$, hence $\Mon(G_1\times G_2)\subset \Mon(G_1)\times\Mon(G_2)$.
The othe inclusion is similar.
\end{proof}

Let $K/\mathbb Q$ be a finite Galois extension with Galois group $G$. Let $s_0\in\mathbb C\setminus \{1\}$ and let 
$$H(s_0)=\{ \chi\in \Ch(G)\;:\;L(s,\chi)\text{ is holomorphic at }s_0\}.$$
The semigroup $H(s_0)$ was firstly introduced by F.\ Nicolae in \cite{monat}.

\begin{teor}\label{p211}
$H(s_0)$ is a normal submonoid of $\Ch(G)$. Moreover, $\widehat{M(G)}\subset H(s_0)$, hence $\widehat{R_G}\subset K[H(s_0)]$.
\end{teor}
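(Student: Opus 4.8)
The plan is to verify the three assertions in order: $H(s_0)$ is a submonoid of $\Ch(G)$, it is normal (integrally closed in $\Ch(G)$), and it contains $\widehat{M(G)}$; the last inclusion immediately gives $\widehat{R_G}=K[\widehat{M(G)}]\subset K[H(s_0)]$.

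First I would check that $H(s_0)$ is a submonoid. This rests on the factorization property of Artin L-functions: $L(s,\chi_1+\chi_2)=L(s,\chi_1)L(s,\chi_2)$, and the fact that each $L(s,\chi)$ is meromorphic on $\mathbb C$ with no zeros and only possible poles at $s=1$ (by Brauer's theorem $L(s,\chi)$ is a ratio of Hecke L-functions, none of which vanish, and the only pole of a Hecke L-function is at $s=1$). Hence for $s_0\neq 1$, $L(s,\chi)$ is holomorphic at $s_0$ iff it does not have a pole there, and since there are no zeros, $\ord_{s=s_0}L(s,\chi_1+\chi_2)=\ord_{s=s_0}L(s,\chi_1)+\ord_{s=s_0}L(s,\chi_2)$ with all orders $\le 0$; the sum is $\ge$ each summand, so holomorphy of the sum forces holomorphy of each summand, and conversely. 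This shows $H(s_0)$ is closed under addition, contains $0$ (as $L(s,0)=1$), and that it is even a "pure" submonoid of $\Ch(G)$ in the sense that $\chi+\psi\in H(s_0)$ implies $\chi,\psi\in H(s_0)$.

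Next, normality: I must show that if $\chi\in\Ch(G)$ and $n\chi\in H(s_0)$ for some $n\ge 1$, then $\chi\in H(s_0)$. Here $\chi$ is an honest character, so $L(s,\chi)$ again has no zeros and at worst a pole at $s=1$; from $L(s,n\chi)=L(s,\chi)^n$ we get $n\cdot\ord_{s=s_0}L(s,\chi)=\ord_{s=s_0}L(s,n\chi)\ge 0$, hence $\ord_{s=s_0}L(s,\chi)\ge 0$, i.e. $L(s,\chi)$ is holomorphic at $s_0$, so $\chi\in H(s_0)$. (One should note this argument only needs that elements of $\Ch(G)$ have L-functions without zeros away from $s=1$, which is the standard consequence of Brauer induction.) For the inclusion $\widehat{M(G)}\subset H(s_0)$, I would first observe that every monomial character $\lambda^G$ lies in $H(s_0)$: indeed $L(s,\lambda^G)=L(s,\lambda,K/K^H)$ is a Hecke L-function attached to the linear character $\lambda$ of $H=\Gal(K/K^H)$, and such an L-function is entire away from $s=1$; hence $\Mon(G)\subset H(s_0)$, and since $H(s_0)$ is a monoid, $M(G)\subset H(s_0)$. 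Finally, $\widehat{M(G)}$ is by definition the integral closure of $M(G)$ in $\Ch(G)$, so any $\chi\in\widehat{M(G)}$ satisfies $n\chi\in M(G)\subset H(s_0)$ for some $n\ge 1$; by normality of $H(s_0)$ just established, $\chi\in H(s_0)$. Passing to $K$-algebras gives $\widehat{R_G}=K[\widehat{M(G)}]\subset K[H(s_0)]$.

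The main obstacle — really the only nontrivial input — is the analytic fact that, for $\chi$ an actual character (not merely virtual), $L(s,\chi)$ has no zeros and no poles on $\mathbb C\setminus\{1\}$; this is what makes the order function additive-and-nonpositive and lets holomorphy be detected multiplicatively. I expect to cite this from Nicolae's paper \cite{monat} (or from the basic theory of Artin L-functions via Brauer induction), rather than reprove it. Everything else is bookkeeping with orders of meromorphic functions at the fixed point $s_0$.
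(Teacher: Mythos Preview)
Your argument follows the same route as the paper's: monomial characters induce Hecke L-functions (hence $M(G)\subset H(s_0)$), the identity $L(s,d\chi)=L(s,\chi)^d$ gives normality, and then $\widehat{M(G)}\subset H(s_0)$ follows. Those steps are fine and match the paper almost verbatim.

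However, you have inserted a false analytic claim and leaned on it in two places. It is \emph{not} true that for an actual character $\chi$ the function $L(s,\chi)$ has no zeros on $\mathbb C\setminus\{1\}$: already $L(s,1_G)=\zeta_K(s)$ has infinitely many zeros, and Hecke L-functions certainly vanish. (Your parenthetical ``none of which vanish'' is simply wrong.) Consequently your ``pure submonoid'' statement --- that $\chi+\psi\in H(s_0)$ forces $\chi,\psi\in H(s_0)$ --- is not established; a zero of $L(s,\chi)$ at $s_0$ could in principle cancel a pole of $L(s,\psi)$. Fortunately, none of this is needed. Closure of $H(s_0)$ under addition only uses that a product of functions holomorphic at $s_0$ is holomorphic at $s_0$, and normality only uses that $\ord_{s_0}L(s,\chi)^d=d\cdot\ord_{s_0}L(s,\chi)\ge 0$ implies $\ord_{s_0}L(s,\chi)\ge 0$ --- no information about zeros is required. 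Strip out the ``no zeros'' assertions and the purity digression, and your proof is exactly the paper's.
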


\begin{proof}
Since, for any monomial character $\chi$ of $G$, the $L$-function $L(s,\chi)$ is holomorphic on $\mathbb C\setminus\{1\}$, it
follows that $M(G)\subset H(s_0)$.  Let $\chi\in \Ch(G)$ such that 
$d\chi \in H(s_0)$ for some positive integer $d$. It follows that $L(s,d\chi)=L(s,\chi)^d$ is holomorphic at $s_0$, 
hence $L(s,\chi)$ is holomorphic at $s_0$. Therefore, $H(s_0)$ is normal.

Since $M(G)\subset H(s_0)$ and $H(s_0)$ is normal, it follows that $\widehat{M(G)}\subset H(s_0)$, hence $\widehat{R_G}\subset K[H(s_0)]$.
\end{proof}

\begin{cor}
Let $G$ be a finite group. Then:
\begin{enumerate}
 \item[(1)] If $G$ is monomial, then $R_G=\widehat{R_G}=K[H(s_0)]=K[x_1,\ldots,x_r]$. 
 \item[(2)] If $G$ is quasi-monomial, then $K[x_1^{a_1},\ldots,x_r^{a_r}]=R_G \subset \widehat{R_G}=K[H(s_0)]=K[x_1,\ldots,x_r]$.
\end{enumerate}
\end{cor}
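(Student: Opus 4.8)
The plan is to derive both statements as formal consequences of Proposition~\ref{p21}, the normality of a polynomial ring, and Theorem~\ref{p211}, squeezed between two elementary inclusions (understood, as in Theorem~\ref{p211}, with $G=\Gal(K/\mathbb Q)$ so that $H(s_0)$ makes sense). The first thing I would isolate is the unconditional inclusion
\[
K[H(s_0)]\subseteq K[x_1,\ldots,x_r],
\]
which holds for \emph{every} such $G$ simply because, by definition, $H(s_0)$ is a subset of $\Ch(G)$, and under the identification $\Ch(G)\cong\mathbb N^r$ one has $K[\Ch(G)]=K[x_1,\ldots,x_r]$. Combined with the inclusion $\widehat{R_G}\subseteq K[H(s_0)]$ given by Theorem~\ref{p211}, this shows that as soon as I establish $\widehat{R_G}=K[x_1,\ldots,x_r]$, the chain $\widehat{R_G}\subseteq K[H(s_0)]\subseteq K[x_1,\ldots,x_r]=\widehat{R_G}$ collapses and all three coincide. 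So the real task is to pin down $R_G$ and $\widehat{R_G}$ in each case.

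For (1): if $G$ is monomial, Proposition~\ref{p21}(1) gives $R_G=K[x_1,\ldots,x_r]$; equivalently $M(G)=\Ch(G)$, which is already a normal affine monoid, so $\widehat{M(G)}=M(G)$ and hence $\widehat{R_G}=R_G$. Feeding this into the squeeze above yields $R_G=\widehat{R_G}=K[H(s_0)]=K[x_1,\ldots,x_r]$, as claimed.

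For (2): if $G$ is quasi-monomial, Proposition~\ref{p21}(2) gives $R_G=K[x_1^{a_1},\ldots,x_r^{a_r}]$ for certain positive integers $a_i$; equivalently $M(G)$ is the submonoid of $\Ch(G)$ generated by $a_1\chi_1,\ldots,a_r\chi_r$. The key computation is that $\widehat{M(G)}=\Ch(G)$: setting $d=\operatorname{lcm}(a_1,\ldots,a_r)$, for any $\psi=b_1\chi_1+\cdots+b_r\chi_r\in\Ch(G)$ one has $d\psi=\sum_{j}(db_j/a_j)\,(a_j\chi_j)\in M(G)$, so every element of $\Ch(G)$ has a positive integer multiple in $M(G)$; since trivially $\widehat{M(G)}\subseteq\Ch(G)$, equality follows. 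Therefore $\widehat{R_G}=K[\Ch(G)]=K[x_1,\ldots,x_r]$, and the squeeze then gives $R_G=K[x_1^{a_1},\ldots,x_r^{a_r}]\subseteq\widehat{R_G}=K[H(s_0)]=K[x_1,\ldots,x_r]$, which is exactly the asserted statement.

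Being a corollary, this should involve no genuine obstacle; the only two points that deserve care are the unconditional inclusion $K[H(s_0)]\subseteq K[x_1,\ldots,x_r]$ — without which the two-sided squeeze does not close — and the verification that in the quasi-monomial case the monoid normalization exhausts all of $\Ch(G)$ rather than merely the sublattice $\{\psi:a_j\mid\langle\psi,\chi_j\rangle\}$, which the $\operatorname{lcm}$ argument settles.
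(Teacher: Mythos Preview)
Your proof is correct and follows the same route as the paper, which simply says the corollary is immediate from Proposition~\ref{p21} and Theorem~\ref{p211}. You have made explicit exactly the two steps the paper leaves implicit: the trivial inclusion $K[H(s_0)]\subseteq K[x_1,\ldots,x_r]$ coming from $H(s_0)\subset\Ch(G)$, and, in the quasi-monomial case, the computation $\widehat{M(G)}=\Ch(G)$ via the $\operatorname{lcm}$ argument, so that the squeeze $\widehat{R_G}\subseteq K[H(s_0)]\subseteq K[x_1,\ldots,x_r]=\widehat{R_G}$ closes.
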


\begin{proof}
The proof is immediate from Proposition \ref{p21} and Theorem \ref{p211}.
\end{proof}

\section{Supercharacter theoretic case}

Diaconis and Isaacs \cite{isaacs} introduced the theory of supercharacters as follows:

\begin{dfn}
Let $G$ be a finite group. Let $\mathcal K$ be a partition of $G$ and let $\mathcal X$ be a partition of $\Irr(G)$. 
The ordered pair $C:=(\mathcal X,\mathcal K)$
is a \emph{supercharacter theory} if:
\begin{enumerate}
\item[(1)] $\{1\}\in\mathcal K$,
\item[(2)] $|\mathcal X|=|\mathcal K|$, and
\item[(3)] for each $X\in\mathcal X$, the character $\sigma_X=\sum_{\psi\in \mathcal X}\psi(1)\psi$ is constant on each $K\in\mathcal K$.
\end{enumerate}
The characters $\sigma_X$ are called \emph{supercharacters}, and the elements $K$ in $\mathcal K$ are called \emph{superclasses}.
We denote $\Sup(G)$ the set of supercharacter theories of $G$.
\end{dfn}

Diaconis and Isaacs showed their theory enjoys properties similar to the classical character theory.
For example, every superclass is a union of conjugacy classes in $G$; see \cite[Theorem 2.2]{isaacs}.
The irreducible characters and conjugacy classes of $G$ give a supercharacter 
theory of $G$, which will be referred to as the \emph{classical theory} of $G$.

Also, as noted in \cite{isaacs}, every group $G$ admits
a non-classical theory with only two supercharacters $1_G$ and $\Reg(G)-1_G$ and
superclasses $\{1\}$ and $G\setminus\{1\}$, where $1_G$ denotes the trivial character of $G$
and $\Reg(G) = \sum_{\chi\in\Irr(G)}\chi(1)\chi$ is the regular character of $G$. 
This theory will be called the \emph{maximal theory} of $G$.

Let $C=(\mathcal X,\mathcal K)$ supercharacter theory of $G$ and assume that $\mathcal X=\{X_1,\ldots,X_m\}$.
We let $\Char(G,C):=\{a_1\sigma_{X_1}+\cdots+a_m\sigma_{X_m}\;:\;a_i\in\mathbb Z\},$
the set of \emph{virtual supercharacters} of $G$, w.r.t. $C$. Note that $(\Char(G,C),+)$ is an abelian group,
isomorphic cu $\mathbb Z^p$. Moreover, $\Char(G,C)$ is a subgroup of $\Char(G)$.

Also, we let
$\Ch(G,C):=\{a_1\sigma_{X_1}+\cdots+a_m\sigma_{X_m}\;:\;a_i\in\mathbb N\}\subset \Ch(G).$
We consider the monoid 
$$M(G,C):=\Ch(G,C)\cap M(G).$$
We consider the monomials
$F_i:=\prod_{\chi_j \in X_i}x_j^{d_j},\; 1\leq i\leq m,$ 
where $d_j:=\chi_j(1)$ for all $1\leq j\leq r$. 
Let $$R_{G,C}:=K[M(G,C)] \subset K[F_1,F_2,\cdots,F_m].$$
We also consider the monoid 
$$\widehat{M(G,C)}:=\Ch(G,C)\cap \widehat{M(G)}$$
and the associated $K$-algebra 
$$\widehat{R_{G,C}}:=K[\widehat{M(G,C)}] \subset K[F_1,F_2,\cdots,F_m].$$
We recall several definition from \cite{cimrad}:

\begin{dfn}(\cite[Definition 2.3]{cimrad})\label{cqm-def}
Let $G$ be a finite group and let $C:=(\mathcal X,\mathcal K)$ be a supercharacter theory on $G$. Assume that $\mathcal X=\{X_1,\ldots,X_m\}$.
We say that $G$ is \emph{$C$-quasi monomial} (or a $C-QM$-group), 
if for any $k\in \{1,\ldots,m\}$, there exists some subgroups $H_1,\ldots,H_t \leqslant G$ (not necessarily distinct) and 
linear characters $\lambda_1,\ldots,\lambda_t$ of $H_1,\ldots, H_t$ such that:
$$\lambda_1^G+\cdots+\lambda_t^G=d\sigma_{X_k},$$ 
where $d$ is a positive integer.
\end{dfn}

\begin{prop}\label{p215}
Let $G$ be a finite group and let $C:=(\mathcal X,\mathcal K)$ be a supercharacter theory on $G$. Then, the following are equivalent:
\begin{enumerate}
\item[(1)] $G$ is $C$-quasi monomial.
\item[(2)] $K[F_1^{a_1},\ldots,F_m^{a_m}]\subseteq R_{G,C}$ for some positive integers $a_i$.
\item[(3)] $\widehat{R_{G,C}}=K[F_1,\ldots,F_m]$.
\end{enumerate}
\end{prop}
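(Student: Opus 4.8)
The plan is to prove the three-way equivalence by showing $(1)\Rightarrow(3)\Rightarrow(2)\Rightarrow(1)$, mirroring the structure of Proposition \ref{p21}(2) but one level up, in the supercharacter setting. The key objects are the monomials $F_i=\prod_{\chi_j\in X_i}x_j^{d_j}$, which span the ambient algebra $K[F_1,\ldots,F_m]$ in which both $R_{G,C}$ and $\widehat{R_{G,C}}$ live, and the fact that $\sigma_{X_i}$ corresponds to $F_i$ under the monomialization map $\chi=\sum a_j\chi_j\mapsto \prod x_j^{a_j}$. The crucial dictionary is: $d\sigma_{X_k}$ is realized as a sum of induced linear characters $\lambda_1^G+\cdots+\lambda_t^G$ exactly when $F_k^d\in R_{G,C}$, since each $\lambda_i^G\in\Mon(G)$ corresponds to a generator of $R_G$, and the constraint that the sum equals $d\sigma_{X_k}$ forces the product of the corresponding monomials to land in $K[F_1,\ldots,F_m]$, hence in $M(G,C)$, hence the monomial $F_k^d$ lies in $R_{G,C}$.

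For $(1)\Rightarrow(3)$: assuming $C$-quasi monomiality, for each $k$ there is a positive integer $d_k$ with $d_k\sigma_{X_k}\in M(G,C)$, so $F_k^{d_k}\in R_{G,C}\subseteq\widehat{R_{G,C}}$. Since $\widehat{R_{G,C}}$ is the algebra of the integral closure $\widehat{M(G,C)}=\Ch(G,C)\cap\widehat{M(G)}$, and $F_k$ satisfies an integral (in fact monic binomial) equation $T^{d_k}-F_k^{d_k}$ over $\widehat{R_{G,C}}$ — equivalently $\sigma_{X_k}$ lies in the integral closure of $M(G,C)$ because $d_k\sigma_{X_k}\in M(G,C)$ — it follows that $\sigma_{X_k}\in\widehat{M(G,C)}$, so $F_k\in\widehat{R_{G,C}}$ for all $k$. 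Combined with the trivial inclusion $\widehat{R_{G,C}}\subseteq K[F_1,\ldots,F_m]$ (which holds because $\widehat{M(G,C)}\subseteq\Ch(G,C)$ and every element of $\Ch(G,C)$ monomializes into $K[F_1,\ldots,F_m]$), we get equality. Here one should be slightly careful that integral closure is taken in the right ring: $\widehat{M(G)}$ is defined as the integral closure of $M(G)$ in $\Ch(G)$, and $\widehat{M(G,C)}=\Ch(G,C)\cap\widehat{M(G)}$; since $\sigma_{X_k}\in\Ch(G,C)$ and $d_k\sigma_{X_k}\in M(G,C)\subseteq M(G)$, normality of $\widehat{M(G)}$ gives $\sigma_{X_k}\in\widehat{M(G)}$, hence in $\widehat{M(G,C)}$.

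For $(3)\Rightarrow(2)$: if $\widehat{R_{G,C}}=K[F_1,\ldots,F_m]$, then each $F_k\in\widehat{R_{G,C}}$, i.e.\ $\sigma_{X_k}\in\widehat{M(G,C)}\subseteq\widehat{M(G)}$, so by definition of integral closure there is a positive integer $a_k$ with $a_k\sigma_{X_k}\in M(G)$; since $a_k\sigma_{X_k}\in\Ch(G,C)$ as well, we get $a_k\sigma_{X_k}\in M(G,C)$, i.e.\ $F_k^{a_k}\in R_{G,C}$. As $R_{G,C}$ is an algebra, $K[F_1^{a_1},\ldots,F_m^{a_m}]\subseteq R_{G,C}$. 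For $(2)\Rightarrow(1)$: if $F_k^{a_k}\in R_{G,C}=K[M(G,C)]$, then the monomial $F_k^{a_k}=\prod_{\chi_j\in X_k}x_j^{a_kd_j}$ lies in the $K$-algebra generated by the monomials coming from $M(G,C)$; matching supports and using that each generator of $R_G$ is a monomial of a monomial character, the monomial $F_k^{a_k}$ must be a product of monomials $u_{i_1}\cdots u_{i_t}$ each corresponding to an induced linear character $\lambda_\ell^G$, so $\lambda_1^G+\cdots+\lambda_t^G=a_k\sigma_{X_k}$, which is exactly the defining condition of $C$-quasi monomiality with $d=a_k$.

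The main obstacle I expect is the careful bookkeeping in the step relating ``the monomial $F_k^{a_k}$ lies in $R_{G,C}$'' to ``$a_k\sigma_{X_k}$ is a $\mathbb{Z}_{\ge0}$-combination of monomial characters'', i.e.\ making precise that membership of a monomial in the $K$-algebra $K[M(G)]$ forces it to be an actual product of the monomial generators (a standard fact for affine monoid algebras: a monomial lies in $K[M]$ iff its exponent vector lies in $M$), together with the intersection with $\Ch(G,C)$ that pins the combination down to $M(G,C)$. Once that dictionary between $M(G,C)$-membership and the existence of the required $\lambda_i$'s is stated cleanly, each implication is a short argument using normality of $\widehat{M(G)}$ (Theorem \ref{p211}'s proof technique) and the trivial ambient inclusion into $K[F_1,\ldots,F_m]$.
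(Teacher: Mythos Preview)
Your proof is correct and rests on the same ideas as the paper's: the dictionary ``$F_k^{a}\in R_{G,C}$ iff $a\sigma_{X_k}\in M(G,C)$ iff $a\sigma_{X_k}$ is a sum of induced linear characters'', together with the standing inclusion $R_{G,C}\subset\widehat{R_{G,C}}\subset K[F_1,\ldots,F_m]$ and the characterisation of the integral closure of an affine monoid. The only difference is organisational: the paper proves $(1)\Leftrightarrow(2)$ directly from Definition~\ref{cqm-def} and then dismisses $(2)\Leftrightarrow(3)$ as clear from $R_{G,C}\subset K[F_1,\ldots,F_m]$, whereas you run the cycle $(1)\Rightarrow(3)\Rightarrow(2)\Rightarrow(1)$ and spell out the normality step explicitly.
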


\begin{proof}
$(1)\Leftrightarrow (2)$ $G$ is $C$-quasi monomial if and only if, for any $1\leq i\leq m$, there exist some subgroups $H_{i1},\ldots,H_{it_i} \leqslant G$ 
and some linear characters $\lambda_{i1},\ldots,\lambda_{it_i}$ of $H_{i1},\ldots,H_{it_i}$ such that
$$\lambda_{i1}^{G}+\cdots+\lambda_{it_i}^G=a_i\sigma_{X_i},$$
fore some positive integer $a_i$, that is  $F_i^{a_i}\in R_{G,C}$.

$(2)\Leftrightarrow (3)$ It is clear, since $R_{G,C}\subset K[F_1,\ldots,F_m]$.
\end{proof}

\begin{dfn}(\cite[Definition 2.9]{cimrad})\label{cam-def}
Let $G$ be a finite group and let $C=(\mathcal X,\mathcal K)$ be a supercharacter theory on $G$. Assume that $\mathcal X=\{X_1,\ldots,X_m\}$.
We say that $G$ is 
\emph{$C$-almost monomial}, 
if for any $k\neq \ell$, there exist some subgroups $H_1,\ldots,H_t \leqslant G$ (not necessarily distinct) and 
linear characters $\lambda_1,\ldots,\lambda_t$ of $H_1,\ldots, H_t$ such that:
$$\lambda_1^G+\cdots+\lambda_t^G=\sum_{i=1}^m \alpha_i\sigma_{X_i},$$ 
where $\alpha_i\geq 0$ are integers with $\alpha_k>0$ and $\alpha_{\ell}=0$.
\end{dfn}

\begin{prop}
Let $G$ be a finite group and let $C=(\mathcal X,\mathcal K)$ be a supercharacter theory on $G$. 
The following are equivalent:
\begin{enumerate}
 \item[(1)] $G$ is $C$-almost monomial.
 \item[(2)] For any $k\neq \ell \in \{1,\ldots,m\}$, there exist some integers $\alpha_i\geq 0$ such that 
            $$F_1^{\alpha_1}\cdots F_{m}^{\alpha_{m}} \in R_{G,C},\text{ with }\alpha_k>0\text{ and }\alpha_{\ell}=0.$$
 \item[(3)] For any $\ell\in\{1,\ldots,m\}$, there exist some integers $\alpha_i>0$ such that 
            $$F_1^{\alpha_1}\cdots F_{\ell-1}^{\alpha_{\ell-1}}\cdot F_{\ell+1}^{\alpha_{\ell+1}} \cdots F_{m}^{\alpha_{m}} \in R_{G,C}.$$
\end{enumerate}
\end{prop}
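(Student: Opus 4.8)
The plan is to translate Definition \ref{cam-def} into monomial language, exactly as was done for ordinary almost monomiality in Proposition \ref{25}. The basic dictionary is this: under the identification $\sum_j b_j\chi_j\mapsto\prod_j x_j^{b_j}$ of $\Ch(G)$ with monomials in $K[x_1,\dots,x_r]$, the supercharacter $\sigma_{X_i}$ is sent to $F_i$, so $\sum_{i=1}^m\alpha_i\sigma_{X_i}$ is sent to $\prod_{i=1}^m F_i^{\alpha_i}$. Since $M(G,C)=\Ch(G,C)\cap M(G)$ is a submonoid and distinct monomials are $K$-linearly independent in $K[F_1,\dots,F_m]$, a monomial $\prod_i F_i^{\alpha_i}$ belongs to $R_{G,C}=K[M(G,C)]$ if and only if $\sum_i\alpha_i\sigma_{X_i}\in M(G)$, and by definition of $M(G)$ this in turn means exactly that there exist subgroups $H_1,\dots,H_t\leqslant G$ and linear characters $\lambda_s\in\Lin(H_s)$ with $\sum_{s=1}^t\lambda_s^G=\sum_{i=1}^m\alpha_i\sigma_{X_i}$. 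The membership in $\Ch(G,C)$ is automatic here, since every element of $\Ch(G,C)$ already has the form $\sum_i\alpha_i\sigma_{X_i}$ with $\alpha_i\in\mathbb N$. Once this equivalence is set up the three implications follow directly.

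For $(1)\Rightarrow(2)$, fix $k\neq\ell$; $C$-almost monomiality supplies $H_s$ and $\lambda_s$ with $\sum_s\lambda_s^G=\sum_i\alpha_i\sigma_{X_i}$, $\alpha_k>0$, $\alpha_\ell=0$, and the dictionary gives $\prod_i F_i^{\alpha_i}\in R_{G,C}$, which is (2). For $(2)\Rightarrow(3)$, fix $\ell$; for each $j\in\{1,\dots,m\}\setminus\{\ell\}$ apply (2) with $k=j$ to obtain exponents $\alpha_i^{(j)}\geq 0$ ($i=1,\dots,m$) with $\prod_i F_i^{\alpha_i^{(j)}}\in R_{G,C}$, $\alpha_j^{(j)}>0$ and $\alpha_\ell^{(j)}=0$; multiplying these $m-1$ monomials gives an element of $R_{G,C}$ whose $F_i$-exponent is $\beta_i:=\sum_{j\neq\ell}\alpha_i^{(j)}$, so $\beta_\ell=0$ while $\beta_j\geq\alpha_j^{(j)}>0$ for every $j\neq\ell$, which is (3). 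For $(3)\Rightarrow(1)$, fix $k\neq\ell$ and apply (3) with this $\ell$ to get $\prod_{i\neq\ell}F_i^{\alpha_i}\in R_{G,C}$ with all $\alpha_i>0$; setting $\alpha_\ell:=0$ and running the dictionary backwards yields $H_s$ and $\lambda_s\in\Lin(H_s)$ with $\sum_s\lambda_s^G=\sum_i\alpha_i\sigma_{X_i}$, $\alpha_k>0$, $\alpha_\ell=0$, which is precisely the condition in Definition \ref{cam-def}.

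The only genuinely substantive point is the equivalence in the first paragraph, namely that a monomial lies in the monoid algebra $R_{G,C}$ precisely when its exponent vector comes from an element of $M(G,C)$; this is just the standard fact that an affine monoid algebra has the monoid elements as a $K$-basis, so there is no real obstacle beyond being careful with the bookkeeping. I therefore expect the write-up to be short and essentially mechanical, parallel to the proof of Proposition \ref{25}.
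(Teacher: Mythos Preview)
Your proof is correct and follows essentially the same approach as the paper: the equivalence $(1)\Leftrightarrow(2)$ is just the monomial translation of Definition~\ref{cam-def}, and your $(2)\Rightarrow(3)\Rightarrow(1)$ argument (multiplying together one witness for each $j\neq\ell$) is exactly the content of \cite[Proposition~2.11]{cimrad}, which the paper simply cites rather than reproving. Your write-up is thus a self-contained version of the paper's proof.
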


\begin{proof}
$(1)\Leftrightarrow (2)$ It is immediate from Definition \ref{cam-def}.

$(2)\Leftrightarrow (3)$ It follows from \cite[Proposition 2.11]{cimrad}.
\end{proof}

Let $G$ be a finite group and let $C:=(\mathcal X,\mathcal K)$ be a supercharacter theory of $G$. Let $N\unlhd G$ be a normal subgroup of $G$.
The group $N$ is called $C$-normal or supernormal, if $N$ is a union of superclasses from $C$; see \cite{marberg} and \cite{hend}.
Let $X\in \mathcal X$. By abuse of notation, we write $X\subset \Irr(G/N)$ if for any $\chi \in X$, then $N\subset \Ker(\chi)$.
Let $K\in \mathcal K$. We denote $\widetilde K:=KN/N \subset G/N$.

Now, assume that $N$ is $C$-normal. Without any loss of generality, we can assume that $X_i\subset \Irr(G/N)$ for $1\leq i\leq p$ and $X_i \subsetneq \Irr(G/N)$ for 
$p+1\leq i\leq m$. Let $\widetilde{\mathcal X}:=\{\widetilde{X_1},\ldots,\widetilde{X_p}\}$ and $\widetilde{\mathcal K}:=\{\widetilde K_1,\ldots,\widetilde K_p\}$.
According to \cite[Proposition 6.4]{hend}, the pair
$$\widetilde C:=C^{G/N}=(\widetilde{\mathcal X},\widetilde{\mathcal K})$$
is a supercharacter theory of $G/N$.

\begin{teor}
 With the above notations, we have $R_{G/N,\widetilde C}=R_{G,C}\cap K[F_1,\ldots,F_p]$.
\end{teor}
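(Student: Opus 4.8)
The plan is to mimic the proof of the earlier quotient theorem (the one stating $R_{G/N} = R_G \cap K[x_1,\ldots,x_s]$), transporting it from the classical setting to the supercharacter setting. First I would establish the easy inclusion $R_{G/N,\widetilde C}\subseteq R_{G,C}\cap K[F_1,\ldots,F_p]$. The containment in $K[F_1,\ldots,F_p]$ is immediate, since by construction $R_{G/N,\widetilde C}$ is generated by monomials in the variables attached to $\widetilde{X_1},\ldots,\widetilde{X_p}$, and under the identification $\Irr(G/N)\hookrightarrow \Irr(G)$ these are exactly $x_j$ for $\chi_j$ with $N\subset\Ker(\chi_j)$, which are precisely the variables occurring in $F_1,\ldots,F_p$. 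For the containment in $R_{G,C}$, the key observation is the inflation compatibility: if $H/N\leqslant G/N$ and $\widetilde\lambda$ is a linear character of $H/N$, then $\widetilde\lambda^{G/N}$ inflated to $G$ equals $\lambda^G$, where $\lambda$ is the inflation of $\widetilde\lambda$ to $H$. Hence every generator of $M(G/N,\widetilde C)$, being an element of $\Ch(G/N,\widetilde C)\cap M(G/N)$, inflates to an element of $\Ch(G,C)\cap M(G) = M(G,C)$, giving $R_{G/N,\widetilde C}\subseteq R_{G,C}$.

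For the converse inclusion, take a monomial $u\in R_{G,C}\cap K[F_1,\ldots,F_p]$. After reducing to the case where $u$ comes from a single monomial character, we may assume $u = x_1^{a_1}\cdots x_r^{a_r}$ with $\lambda^G = \sum_{j} a_j\chi_j$ for some linear character $\lambda$ of a subgroup $H\leqslant G$, and moreover $\lambda^G\in\Ch(G,C)$, i.e.\ $\lambda^G=\sum_{i=1}^m\beta_i\sigma_{X_i}$ with $\beta_i\geq 0$. The hypothesis $u\in K[F_1,\ldots,F_p]$ forces $\beta_i = 0$ for $p+1\leq i\leq m$; equivalently, $a_j=0$ whenever $\chi_j$ lies in some $X_i$ with $i>p$, hence $a_j=0$ unless $N\subset\Ker(\chi_j)$. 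As in the classical proof, this gives $N\subset\Ker(\lambda^G)$, so $N\subseteq H$ and $\lambda$ descends to a linear character $\widetilde\lambda$ of $H/N$ with $\widetilde\lambda^{G/N}$ inflating to $\lambda^G$. Thus $u$, viewed in the variables of $G/N$, is the monomial attached to $\widetilde\lambda^{G/N}$; it remains to check that $\widetilde\lambda^{G/N}\in\Ch(G/N,\widetilde C)$, which follows because inflation is injective on virtual supercharacters and $\widetilde C$ is obtained from $C$ by restriction, so $\lambda^G=\sum_{i=1}^p\beta_i\sigma_{X_i}$ descends to $\widetilde\lambda^{G/N}=\sum_{i=1}^p\beta_i\sigma_{\widetilde{X_i}}$. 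Hence $u\in R_{G/N,\widetilde C}$.

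The main obstacle I anticipate is bookkeeping the reduction "we may assume $u$ comes from a single monomial character." An element of $R_{G,C}$ is a $K$-linear combination of monomials, each of which is a product of generators $F_i^{a_i}$-type monomials coming from $M(G,C)$, which in turn are sums of monomial characters; one must argue that each individual monomial generator of $M(G,C)\cap\Ch(G/N,\widetilde C)$-relevant pieces already lies in $M(G/N,\widetilde C)$, rather than just the full sum. This is handled exactly as in the classical theorem by working one monomial character at a time and using that membership in $\Ch(G,C)$ restricting to $F_1,\ldots,F_p$ is detected coordinatewise; the $C$-normality of $N$ is what guarantees $\widetilde C$ is well-defined and that the partition $\widetilde{\mathcal X}$ restricts cleanly, via \cite[Proposition 6.4]{hend}. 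Once the single-character reduction is in place, every remaining step is a direct transcription of the classical argument, so I would keep the write-up short and refer back to the proof of the quotient theorem for the parallel details.
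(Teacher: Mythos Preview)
Your approach is the paper's approach, and the argument is correct once one small wrinkle is straightened out. The reduction ``we may assume $u$ comes from a single $\lambda^G$ with $\lambda^G\in\Ch(G,C)$'' is not actually available in the supercharacter setting: in the classical theorem it works because $\Hilb(M(G))\subset\Mon(G)$, so every generator of $R_G$ is a single induced linear character, but an element of $M(G,C)=\Ch(G,C)\cap M(G)$ is in general only a \emph{sum} $\lambda_1^G+\cdots+\lambda_t^G$ landing in $\Ch(G,C)$, with no individual summand required to lie there. The paper sidesteps this entirely by writing $u=F_1^{a_1}\cdots F_p^{a_p}$ with $\lambda_1^G+\cdots+\lambda_t^G=a_1\sigma_{X_1}+\cdots+a_p\sigma_{X_p}$, noting that every constituent of the right-hand side has $N$ in its kernel, hence $N\subset\Ker(\lambda_j^G)$ and so $N\subset H_j$ for each $j$; then each $\lambda_j$ descends and the full sum lies in $M(G/N,\widetilde C)$. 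This is precisely what your ``obstacle'' paragraph is reaching for, so drop the single-$\lambda$ framing and argue directly with the sum, and your write-up matches the paper's.
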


\begin{proof}
The inclusion $R_{G/N,\widetilde C}\subseteq R_{G,C}\cap K[F_1,\ldots,F_p]$ is clear. Conversely, let $u\in R_{G,C}\cap K[F_1,\ldots,F_p]$.
It follows that $u=F_1^{a_1}\cdots F_p^{a_p}$ for some nonnegative integers $a_i$. Also, there exist some subgroups $H_1,\ldots,H_t$ and 
some linear characthers $\lambda_1,\ldots,\lambda_t$ of $H_1,\ldots,H_t$ such that
$$\lambda_1^{G}+\cdots+\lambda_t^G=a_1\sigma_{X_1}+\cdots+a_p\sigma_{X_p}.$$
Since $X_i\subset \Irr(G/N)$ for all $1\leq i\leq p$, it follows that $N\subset H_j$ for all $1\leq j\leq t$. Hence we are done.
\end{proof}

Let $G$ and $G'$ be two finite groups and let $C=(\mathcal X,\mathcal K)$ and $C'=(\mathcal X',\mathcal K')$ be supercharacter theories of $G$ and $G'$, respectively.
According to \cite[Proposition 8.1]{hend}, $C\times C'$ is a supercharacter theory on $G\times G'$.

\begin{teor}
 With the above notations, we have $R_{G\times G',C\times C'}=R_{G,C}\otimes_K R_{G',C'}.$
\end{teor}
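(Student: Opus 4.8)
The plan is to reduce this statement to the analogous product formula for ordinary monomial algebras, namely Theorem \ref{p210}, by establishing the key combinatorial fact that the monoid of virtual supercharacters and the monoid $M(G)$ behave well under the product construction $C\times C'$. Concretely, recall that $\Irr(G\times G')=\{\chi\times\chi'\;:\;\chi\in\Irr(G),\chi'\in\Irr(G')\}$, and the supercharacter theory $C\times C'$ of \cite[Proposition 8.1]{hend} has superclasses $X\times X'$ with $X\in\mathcal X$, $X'\in\mathcal X'$, and supercharacters $\sigma_{X\times X'}=\sigma_X\otimes\sigma_{X'}$ (up to the usual degree normalization). Under the identification of the polynomial ring of $G\times G'$ with $K[x_iy_j]$, the generating monomial $F_{(i,j)}$ of $C\times C'$ equals $F_iG_j$, where $F_i=\prod_{\chi_k\in X_i}x_k^{d_k}$ are the generators for $C$ and $G_j=\prod_{\chi'_l\in X'_j}y_l^{d'_l}$ those for $C'$.

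First I would record that $M(G\times G')=M(G)\otimes M(G')$ at the level of monoids: this is exactly the content of the proof of Theorem \ref{p210}, which shows $\Mon(G\times G')=\Mon(G)\times\Mon(G')$, hence $R_{G\times G'}=R_G\otimes_K R_{G'}$. Next I would intersect with the virtual supercharacter monoid. By definition $M(G\times G',C\times C')=\Ch(G\times G',C\times C')\cap M(G\times G')$. The point is that a character of $G\times G'$ is a (nonnegative) combination of the $\sigma_{X_i\times X'_j}$ if and only if, written in the $\Irr(G\times G')$ basis, its coefficient vector is a tensor-compatible block pattern; equivalently $\Ch(G\times G',C\times C')=\Ch(G,C)\otimes\Ch(G',C')$ inside $\Ch(G)\otimes\Ch(G')=\Ch(G\times G')$. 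Combining these two facts, I would argue that an element lies in $M(G\times G',C\times C')$ precisely when it is a sum of monomial characters of $G\times G'$ each of which is a monomial character $\psi\times\psi'$ with $\psi\in M(G,C)$ and $\psi'\in M(G',C')$; translating to monomials, $R_{G\times G',C\times C'}=K[M(G,C)]\otimes_K K[M(G',C')]=R_{G,C}\otimes_K R_{G',C'}$.

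The cleanest way to carry out the middle step is to mimic Theorem \ref{p210} one level down: show directly that $M(G,C)\times M(G',C')\subseteq M(G\times G',C\times C')$ and conversely. For the forward inclusion, if $\psi=\lambda_1^G+\cdots+\lambda_s^G\in M(G,C)$ (so it is also a nonnegative combination of the $\sigma_{X_i}$) and similarly $\psi'\in M(G',C')$, then $\psi\times\psi'=\sum_{a,b}(\lambda_a\times\lambda_b')^{G\times G'}\in M(G\times G')$, and since the $\sigma_{X_i}$-coefficients of $\psi$ and the $\sigma_{X'_j}$-coefficients of $\psi'$ are nonnegative, the $\sigma_{X_i\times X'_j}$-coefficients of $\psi\times\psi'$ are nonnegative, so $\psi\times\psi'\in\Ch(G\times G',C\times C')$ as well; hence it lies in $M(G\times G',C\times C')$. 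The converse inclusion follows the same decomposition $H=H\cap(G\times G')$ splits as $H_1\times H_2$, $\lambda=\lambda_1\times\lambda_2$, and a generator of $M(G\times G',C\times C')$ decomposes as a product, with the supercharacter-support condition on the product forcing the corresponding conditions on each factor.

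The main obstacle — and the one place where care is genuinely needed rather than a routine translation — is the compatibility of the supercharacter-span condition with the monomial-monoid condition simultaneously: one must be sure that when an element of $M(G\times G')$ happens to lie in $\Ch(G\times G',C\times C')$, its factorization inherited from $M(G\times G')=M(G)\otimes M(G')$ can be chosen so that each factor already lies in the respective $\Ch(G,C)$ and $\Ch(G',C')$, i.e. that the tensor decomposition of the coefficient array is itself ``$C$-by-$C'$ block constant.'' This is where the structural description of $C\times C'$ from \cite[Proposition 8.1]{hend} — that the parts of $\mathcal X\times\mathcal X'$ are exactly the $X_i\times X'_j$ and $\sigma_{X_i\times X'_j}=\sigma_{X_i}\otimes\sigma_{X'_j}$ up to normalization — is essential, and I would state this explicitly as a lemma before invoking it, so that the block-constancy is immediate from the rank-one structure of each homogeneous tensor component.
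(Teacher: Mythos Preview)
Your plan matches the paper's exactly: reduce to the monoid identity $M(G\times G',C\times C')=M(G,C)\times M(G',C')$ and argue as in Theorem~\ref{p210} --- the paper's proof is literally that single sentence and nothing more. You in fact go further than the paper by correctly isolating the one nontrivial step (that membership in $\Ch(G\times G',C\times C')$ is compatible with the tensor decomposition coming from $M(G\times G')$), which the paper does not mention at all.

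One caution: the claim that an arbitrary subgroup $H\leqslant G\times G'$ splits as $H_1\times H_2$, which you borrow verbatim from the proof of Theorem~\ref{p210}, is false in general (consider the diagonal in $G\times G$). For a rigorous writeup, drop that shortcut and lean instead on the block-constancy argument you outline in your final paragraph.
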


\begin{proof}
 It suffices to show that $M(G\times G',C\times C')=M(G,C)\times M(G',C')$. The proof is similar to the proof of Theorem \ref{p210}.
\end{proof}

Let $K/\mathbb Q$ be a finite Galois extension with Galois group $G$. Let $C:=(\mathcal X,\mathcal K)$ be a supercharacter theory on $G$, as above.
Let $s_0\in\mathbb C\setminus \{1\}$ and let 
$$H(C,s_0)=\{ \chi\in \Ch(G,C)\;:\;L(s,\chi)\text{ is holomorphic at }s_0\}.$$
We then have the following obvious result:

\begin{teor}\label{p220}
$H(C,s_0)$ is a normal submonoid of $\Ch(G,C)$.
Moreoever, we have $M(G,C)\subset \widehat{M(G,C)} \subset H(C,s_0)$, 
hence $R_{G,C}\subset \widehat{R_{G,C}} \subset K[H(C,s_0)]$.
\end{teor}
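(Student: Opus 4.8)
The plan is to mirror exactly the proof of Theorem \ref{p211}, now working inside the sub-object $\Ch(G,C)$ rather than $\Ch(G)$. First I would observe that $H(C,s_0) = H(s_0)\cap \Ch(G,C)$, simply because a virtual supercharacter is in particular a virtual character of $G$ and the holomorphy condition at $s_0$ is the same in both settings. Since $H(s_0)$ is a submonoid of $\Ch(G)$ by Theorem \ref{p211} and $\Ch(G,C)$ is a submonoid of $\Ch(G)$ (indeed a submonoid isomorphic to $\mathbb N^m$, as noted before the statement), the intersection $H(C,s_0)$ is a submonoid of $\Ch(G,C)$.

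Next I would verify normality directly, exactly as in the proof of Theorem \ref{p211}: if $\chi\in\Ch(G,C)$ and $d\chi\in H(C,s_0)$ for some positive integer $d$, then $L(s,d\chi)=L(s,\chi)^d$ is holomorphic at $s_0$; since a positive integer power of a meromorphic function is holomorphic at a point only if the function itself is, $L(s,\chi)$ is holomorphic at $s_0$, so $\chi\in H(C,s_0)$. Hence $H(C,s_0)$ is a normal submonoid of $\Ch(G,C)$.

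For the chain of inclusions: every supercharacter $\sigma_{X_i}$ is a non-negative integer combination of the monomial characters forming $M(G)$? No — rather, by construction $M(G,C)=\Ch(G,C)\cap M(G)$, and every element of $M(G)$ has holomorphic $L$-function on all of $\mathbb C\setminus\{1\}$ (a sum of monomial characters, each of whose Artin $L$-function is entire away from $s=1$ by Artin's theorem, or more elementarily since induced-from-linear $L$-functions are Hecke $L$-functions). Thus $M(G,C)\subset H(s_0)$, and since $M(G,C)\subset\Ch(G,C)$ as well, we get $M(G,C)\subset H(C,s_0)$. Because $H(C,s_0)$ is normal and contains $M(G,C)$, it contains the integral closure of $M(G,C)$ in $\Ch(G,C)$; but $\widehat{M(G,C)}=\Ch(G,C)\cap\widehat{M(G)}$ is contained in that integral closure, so $\widehat{M(G,C)}\subset H(C,s_0)$. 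Passing to the associated $K$-algebras gives $R_{G,C}\subset\widehat{R_{G,C}}\subset K[H(C,s_0)]$.

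Since the statement is flagged as "obvious," I do not expect a genuine obstacle; the only point requiring a little care is confirming that $\widehat{M(G,C)}$, as defined by $\Ch(G,C)\cap\widehat{M(G)}$, really is contained in the integral closure of $M(G,C)$ within $\Ch(G,C)$ — i.e. that an element of $\Ch(G,C)$ which is integral over $M(G)$ is already integral over $M(G,C)=\Ch(G,C)\cap M(G)$. This follows because if $d\chi\in M(G)$ with $\chi\in\Ch(G,C)$, then $d\chi\in\Ch(G,C)\cap M(G)=M(G,C)$, so $\chi$ is integral over $M(G,C)$ directly. With that remark in place the proof is a two-line repetition of Theorem \ref{p211}'s argument.
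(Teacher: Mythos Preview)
Your proposal is correct and is exactly the argument the paper has in mind: the statement is declared ``obvious'' with no written proof, and your mirroring of Theorem~\ref{p211} inside $\Ch(G,C)$ is the intended justification. One small simplification: for the inclusion $\widehat{M(G,C)}\subset H(C,s_0)$ you do not need to pass through the integral closure of $M(G,C)$ at all, since by definition $\widehat{M(G,C)}=\Ch(G,C)\cap\widehat{M(G)}\subset \Ch(G,C)\cap H(s_0)=H(C,s_0)$ directly from Theorem~\ref{p211}.
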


\begin{cor}
If $G$ is $C$-quasi monomial, then 
$$K[F_1^{a_1},\ldots,F_r^{a_r}] \subset R_{G,C} \subset \widehat{R_{G,C}} = H(C,s_0) = K[F_1,\ldots,F_r].$$
\end{cor}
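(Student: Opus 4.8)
The plan is to read this off directly from Proposition~\ref{p215} and Theorem~\ref{p220}, exactly as the analogous Corollary in Section~2 was read off from Proposition~\ref{p21} and Theorem~\ref{p211}. First I would apply Proposition~\ref{p215}: since $G$ is $C$-quasi monomial, the equivalence $(1)\Leftrightarrow(2)$ supplies positive integers $a_1,\dots,a_m$ (so the subscript in the statement should be $m=|\mathcal X|$, not $r$) with $K[F_1^{a_1},\dots,F_m^{a_m}]\subseteq R_{G,C}$, which is the first inclusion in the chain; and the equivalence $(1)\Leftrightarrow(3)$ gives $\widehat{R_{G,C}}=K[F_1,\dots,F_m]$, which is the last equality.

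The middle inclusion $R_{G,C}\subseteq\widehat{R_{G,C}}$ is the trivial one, since a monoid sits inside its integral closure and hence $K[M(G,C)]\subseteq K[\widehat{M(G,C)}]$; this was already recorded when $\widehat{R_{G,C}}$ was introduced. It remains to insert $K[H(C,s_0)]$ into the chain, i.e.\ to prove $\widehat{R_{G,C}}=K[H(C,s_0)]$. One inclusion, $\widehat{R_{G,C}}\subseteq K[H(C,s_0)]$, is precisely the last assertion of Theorem~\ref{p220}. For the reverse inclusion I would use that $H(C,s_0)\subseteq\Ch(G,C)$ by definition, and that $\Ch(G,C)$ is the monoid of nonnegative integer combinations $a_1\sigma_{X_1}+\cdots+a_m\sigma_{X_m}$, which under the identification $\sigma_{X_i}\leftrightarrow F_i$ corresponds to the set of monomials $F_1^{a_1}\cdots F_m^{a_m}$; hence $K[H(C,s_0)]\subseteq K[\Ch(G,C)]=K[F_1,\dots,F_m]=\widehat{R_{G,C}}$. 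Combining the two inclusions yields $\widehat{R_{G,C}}=K[H(C,s_0)]=K[F_1,\dots,F_m]$, which closes the displayed chain.

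I do not expect a genuine obstacle: the argument is just a concatenation of two results already proved. The only points needing care are notational: one should replace $F_r$ by $F_m$ (and $F_r^{a_r}$ by $F_m^{a_m}$) throughout the display, since $m$ is the number of supercharacters of the theory $C$, and one should read $H(C,s_0)$ in the display as shorthand for $K[H(C,s_0)]$ so that the three objects being equated are all $K$-algebras.
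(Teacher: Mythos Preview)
Your proposal is correct and follows exactly the paper's own route: the paper's proof is the one-line ``It follows from Proposition~\ref{p215} and Theorem~\ref{p220},'' and you have simply unpacked that citation into its constituent pieces (the $(1)\Leftrightarrow(2)$ and $(1)\Leftrightarrow(3)$ parts of Proposition~\ref{p215}, the trivial inclusion into the normalization, and the sandwich $\widehat{R_{G,C}}\subseteq K[H(C,s_0)]\subseteq K[F_1,\dots,F_m]$ coming from Theorem~\ref{p220} and the definition of $H(C,s_0)$). Your remarks that the index should be $m$ rather than $r$ and that $H(C,s_0)$ in the display is to be read as $K[H(C,s_0)]$ are also well taken.
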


\begin{proof}
It follows from Proposition \ref{p215} and Theorem \ref{p220}.
\end{proof}

\section{GAP codes and computer experiments}

The following code returns the Hilbert basis of the monoid $M(G)$.

\noindent
$gap>$ LoadPackage("NumericalSgps");

 MonoAlg:=function(g) 

 local cc,i,x,y,z,o,l,j,t,a,af; 

 cc:=ConjugacyClassesSubgroups(g);

 i:=Size(Irr(g));

 t:=[];

 for x in cc do 

 \hspace{10pt}y:=Representative(x); 

\hspace{10pt}for z in LinearCharacters(y) do 

\hspace{20pt}o:=InducedClassFunction(z,g); l:=[];

\hspace{20pt}for j in [1..i] do 

\hspace{30pt}a:= ScalarProduct(Irr(g)[j],o);

\hspace{30pt}Add(l,a);

\hspace{20pt}od;

\hspace{20pt}Add(t,l);

od; od;

af:=AffineSemigroup(t);

return(MinimalGenerators(af));

end;;\\

Our computer experiments yields us to the following conjecture.

\begin{conj}
Let $G:=SL(2,2^n)$, the special linear group of order $2$, over $\mathbb F_{2^n}$. Then:
$$R_G=K[x_1,\;x_1x_{2^{n-1}+2},\;x_{2^{n-1}+3},\cdots,x_{2^n+1},\;u_2,\ldots,u_{2^{n-1}+1},v,w]\subset K[x_1,\ldots,x_{2^n+1}],$$
where $u_k=(x_2x_3\cdots x_{2^n+1})/x_k$ for $2\leq k\leq 2^{n-1}+1$, $v=x_2x_3\cdots x_{2^{n-1}+1}$ and $w=v\cdot x_{2^{n-1}+2}$.
In particular, the group $G$ is almost monomial.
\end{conj}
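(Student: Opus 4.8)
The plan is the following. Put $q:=2^{n}$ and recall that in characteristic $2$ one has $G=SL(2,q)=PSL(2,q)$ (trivial centre), with $r=q+1$ irreducible characters: the trivial character $\mathbf 1$ (degree $1$), the Steinberg character $\mathrm{St}$ (degree $q$), the $(q-2)/2$ principal--series characters $\chi_{\theta}$ (degree $q+1$, parametrised by the nontrivial characters $\theta$ of a split torus $T\cong C_{q-1}$ modulo inversion), and the $q/2$ discrete--series characters $\theta_{\varphi}$ (degree $q-1$, parametrised by the nontrivial characters $\varphi$ of a non-split torus $T'\cong C_{q+1}$ modulo inversion). The labeling used in the statement is then $x_{1}\leftrightarrow\mathbf 1$, $x_{2},\dots,x_{2^{n-1}+1}\leftrightarrow$ the discrete series, $x_{2^{n-1}+2}\leftrightarrow\mathrm{St}$, $x_{2^{n-1}+3},\dots,x_{2^{n}+1}\leftrightarrow$ the principal series (the sizes $2^{n-1}$ and $2^{n-1}-1$ match).

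First I would prove the inclusion $\supseteq$ by realising each monomial in the list as a monomial character. With $B$ a Borel subgroup and $U\unlhd B$ its unipotent radical: $\mathbf 1=1_{G}^{G}$ gives $x_{1}$; the permutation character $1_{B}^{G}=\mathbf 1+\mathrm{St}$ gives $x_{1}x_{2^{n-1}+2}$; for a nontrivial linear character $\widetilde\theta$ of $B$ (inflated from $B/U$) one has $\widetilde\theta^{G}=\chi_{\theta}$, giving $x_{2^{n-1}+3},\dots,x_{2^{n}+1}$; a short Frobenius-reciprocity computation with the values of $\mathbf 1,\mathrm{St},\chi_{\theta},\theta_{\varphi}$ on $T'$ shows that for a nontrivial character $\varphi$ of $T'$ one has $\varphi^{G}=\mathrm{St}+\sum_{\theta}\chi_{\theta}+\sum_{\theta_{\varphi'}\neq\theta_{\varphi}}\theta_{\varphi'}$, which is exactly $u_{k}$ for the index attached to $\varphi$; and the sign characters of the dihedral torus normalisers $N_{G}(T')\cong D_{2(q+1)}$ and $N_{G}(T)\cong D_{2(q-1)}$ induce to $\mathrm{sgn}^{G}=\sum_{\varphi}\theta_{\varphi}=v$ and $\mathrm{sgn}^{G}=\mathrm{St}+\sum_{\varphi}\theta_{\varphi}=w$. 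The same list also yields, via Proposition~\ref{25}(3), that $G$ is almost monomial: for each $k$, according to whether $x_{k}=x_{1}$, is a discrete-series, the Steinberg, or a principal-series variable, one of the monomials $w\prod_{i}x_{\chi_{i}}$, $\;x_{1}u_{k}\prod_{i}x_{\chi_{i}}$, $\;x_{1}v\prod_{i}x_{\chi_{i}}$, $\;x_{1}w\prod_{i\neq k}x_{\chi_{i}}$ belongs to $R_{G}$ and has support $\{x_{1},\dots,x_{r}\}\setminus\{x_{k}\}$.

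For the reverse inclusion $M(G)\subseteq\langle\text{listed generators}\rangle$ I would run through Dickson's classification of the subgroups of $PSL(2,2^{n})$: (a) subgroups of a Borel $B$ (all $2$-subgroups, all subgroups of the split torus, the groups $W\rtimes C$); (b) cyclic subgroups of the non-split torus $T'$; (c) dihedral subgroups of $N_{G}(T)$ and of $N_{G}(T')$ (which includes $SL(2,2)\cong S_{3}\cong D_{6}$); (d) subfield subgroups $SL(2,2^{m})$, $m\mid n$, $2\le m<n$; (e) $A_{4}$, and, for $n$ even, $A_{5}$. For $H$ in family (a), write $\lambda^{G}=(\Ind_{H}^{B}\lambda)^{G}$; since $B$ has exactly $q-1$ linear characters (each inducing to $\mathbf 1+\mathrm{St}$ or to one $\chi_{\theta}$) and a single character of degree $q-1$, whose induction to $G$ is $\Gamma:=\sum_{\chi\in\Irr(G)}\chi-\mathbf 1=w\cdot\prod_{i}x_{\chi_{i}}$, every such $\lambda^{G}$ is a nonnegative integer combination of $x_{1}x_{2^{n-1}+2}$, the principal-series variables and $\Gamma$. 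For families (b) and (c) one computes $\langle\lambda^{G},\chi\rangle$ directly from the character values on $T'$ and on the unipotent class (which contains the reflections of the dihedral groups), reducing the dihedral case to the cyclic one via the fact that $1_{C_{d}}^{D_{2d}}$ is the sum of the trivial and sign characters of $D_{2d}$; in every case $\lambda^{G}$ emerges as an explicit nonnegative combination of $x_{1}$, the $u_{k}$, $v$, $w$ and the principal-series variables. Writing these decompositions out is routine but somewhat tedious.

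The main obstacle will be families (d) and (e): decomposing the permutation characters $1_{SL(2,2^{m})}^{G}$, $1_{A_{4}}^{G}$, $1_{A_{5}}^{G}$ (and the two non-real linear characters of $A_{4}$) into $\Irr(G)$. For this I would either use the known closed formulas for the permutation characters of $PSL(2,q)$ on subfield and exceptional point-stabilisers (equivalently, the restriction of $\Irr(G)$ to these subgroups), or, where applicable, note that the subgroup already lies in a Borel or a torus normaliser — e.g.\ $A_{4}\cong\mathbb F_{4}\rtimes C_{3}\le B$ when $3\mid q-1$, and this family is altogether vacuous for $q=4$ since there $A_{4}$ is the Borel and $A_{5}=G$ — so that (a)--(c) apply; in each instance one then checks that the resulting multiplicity vector lies in $\langle\text{listed generators}\rangle$, which it does, these characters having comparatively large and balanced multiplicities. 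Finally, a short computation through the explicit membership criterion that the six shapes of generators impose on $M(G)$ shows that for $n\ge 2$ none of the listed generators is a nonnegative combination of the others, so the list is precisely $\Hilb(M(G))$ and $R_{G}=K[\dots]$ as claimed; for $n=1$ the list degenerates (up to redundant products) to $\{x_{1},x_{2},x_{3}\}$, consistent with $SL(2,2)\cong S_{3}$ being monomial.
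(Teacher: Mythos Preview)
This statement is presented in the paper as a \emph{conjecture}; the paper does not prove it. All the paper offers is a remark sketching the inclusion $\supseteq$: it lists the same subgroups you use (the Borel of order $q(q-1)$ for $x_1x_{2^{n-1}+2}$ and the principal-series variables, the non-split torus of order $q+1$ for the $u_k$, and the dihedral normalisers of orders $2(q+1)$ and $2(q-1)$ for $v$ and $w$). Your argument for $\supseteq$ is therefore essentially the same as the paper's, only more explicit (and your Frobenius-reciprocity computations are correct). Your derivation of ``almost monomial'' from the listed generators via Proposition~\ref{25}(3) is a small addition the paper does not spell out.

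Where you go beyond the paper is the reverse inclusion $\subseteq$, which the paper does not attempt at all. Your plan --- induce in stages through $B$, $T'$, $N_G(T)$, $N_G(T')$, and then handle the remaining Dickson families --- is the natural one, and the reductions for families (a)--(c) work as you describe. But note that your own proposal flags families (d) and (e) as ``the main obstacle'' and does not carry them out; in particular, for odd $n\ge 3$ one has $3\mid q+1$, so $A_4$ is \emph{not} contained in a Borel and must be treated directly, and the subfield subgroups $SL(2,2^m)$ require a genuine computation of restriction multiplicities. Until those cases are written down and checked, what you have is a credible outline for $\subseteq$ rather than a proof --- which is consistent with the paper's choice to state the result as a conjecture.
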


\begin{obs}\rm
We sketch the proof for the inclusion $\supseteq$:
It is well known \cite{polo} that $G:=SL(2,2^n)$ has $q=2^n+1$ irreducible characters: $\chi_1$, the trivial character, $\chi_2,\ldots,\chi_{2^{n-1}+1}$, characters of order $q-1$, $\chi_{2^{n-1}+2}$, a character of order $q$, and $\chi_{2^{n-1}+3},\ldots,\chi_{2^n+1}$, characters of order $q+1$. 
The character $\chi_1$ is obviously monomial. Also, the characters $\chi_{2^{n-1}+3},\ldots,\chi_{2^n+1}$ and $\chi_1\chi_{2^{n-1}+2}$ are induced by the
linear characters of a subgroup $H$ of $G$ of order $q(q-1)$.

Let $U_k:=\chi_2+\cdots+\widehat{\chi_k}+ \cdots+\chi_{2^n+1}$, $2\leq k\leq 2^{n-1}+1$. Note that $U_k(1)=q(q-1)$ for any $2\leq k\leq 2^{n-1}+1$. The characters $U_k$
are induced by linear characters of a subgroup of order $q+1$. Similarly, the characters $V:=\chi_2+\chi_3+\cdots+\chi_{2^{n-1}+1}$ and $W:=V+\chi_{2^{n-1}+2}$, are
induced by linear characters of some subgroups of order $2(q+1)$ and $2(q-1)$ respectively.
\end{obs}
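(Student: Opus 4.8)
The plan is to establish $\supseteq$ by exhibiting, for each monomial generator on the right-hand side, that the associated character lies in $\Mon(G)$; since $R_G=K[M(G)]$ and $\Mon(G)\subset M(G)$, this forces the displayed subalgebra into $R_G$. Concretely, identifying a monomial $\prod_j x_j^{a_j}$ with the character $\sum_j a_j\chi_j$, I must show that $\chi_1$, each of $\chi_{2^{n-1}+3},\ldots,\chi_{2^n+1}$, the sum $\chi_1+\chi_{2^{n-1}+2}$, each $U_k$, and the sums $V$ and $W$ are induced from a linear character of a suitable subgroup. Throughout I would fix the field size $q=2^n$, so that $G=SL(2,q)$ has order $q(q^2-1)$ and exactly $q+1=2^n+1$ irreducible characters, and I would record from \cite{polo} the relevant subgroups: a Borel $B$ of order $q(q-1)$, the split torus with normalizer $N$ of order $2(q-1)$, and the nonsplit torus $T'$ of order $q+1$ with normalizer $N'$ of order $2(q+1)$.

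The Borel part is the easiest. The character $\chi_1=1_G$ is linear, hence monomial, giving $x_1\in R_G$. Since $G$ acts $2$-transitively on $G/B\cong\mathbb P^1(\mathbb F_q)$, the permutation character $\Ind_B^G 1_B$ splits as $\chi_1+\chi_{2^{n-1}+2}$ (trivial plus Steinberg), which realizes $\chi_1+\chi_{2^{n-1}+2}$ as a monomial character and yields $x_1x_{2^{n-1}+2}\in R_G$. For the principal series, each nontrivial linear character $\lambda$ of the split torus inflates to a linear character of $B$, and $\Ind_B^G\lambda$ is irreducible of degree $q+1$; as $\lambda$ ranges over the $2^{n-1}-1$ Weyl-orbits of nontrivial characters these exhaust $\chi_{2^{n-1}+3},\ldots,\chi_{2^n+1}$, so each such $x_j$ lies in $R_G$.

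The substantial part is to realize $U_k$, $V$, and $W$ as single induced characters. For $U_k$ I would induce an appropriate linear character $\mu$ of the nonsplit torus $T'$; since $[G:T']=q(q-1)=U_k(1)$ the degrees match, and I would verify via Frobenius reciprocity $\langle \Ind_{T'}^G\mu,\chi_j\rangle=\langle\mu,\Res_{T'}\chi_j\rangle$, using the restrictions to $T'$ from \cite{polo}, that $\Ind_{T'}^G\mu$ is multiplicity-free and omits exactly $\chi_1$ (automatic for $\mu\neq 1_{T'}$) and the one discrete-series character $\chi_k$. Matching each admissible $\mu$ to its index $k$ for $2\le k\le 2^{n-1}+1$ gives all the $u_k$. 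Analogously, inducing a suitable linear character from $N'$ (with $[G:N']=V(1)=q(q-1)/2$) should yield $V$, the sum of all discrete-series characters, and inducing one from $N$ (with $[G:N]=W(1)=q(q+1)/2$) should yield $W=V+\chi_{2^{n-1}+2}$; again the decompositions would be checked by Frobenius reciprocity against the character table.

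The main obstacle is precisely the last paragraph: controlling the full decomposition of the characters induced from $T'$, $N$, and $N'$. A Mackey-type computation of $\langle\Ind_{T'}^G\mu,\Ind_{T'}^G\mu\rangle$ over the $(T',T')$-double cosets, governed by the Weyl group of order $2$, is needed to confirm multiplicity-freeness, and the delicate bookkeeping is to pin down which linear character produces which omitted constituent---the correspondence $\mu\leftrightarrow k$---and to ensure that no unwanted principal-series or Steinberg constituents appear in $V$. This is where a clean statement from \cite{polo}, or the Deligne--Lusztig description of the discrete series, would be invoked rather than recomputed by hand.
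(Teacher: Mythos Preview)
Your proposal is correct and follows essentially the same approach as the paper's sketch: you identify the same subgroups --- the Borel of order $q(q-1)$ for $x_1x_{2^{n-1}+2}$ and the principal series $x_j$, the nonsplit torus of order $q+1$ for the $U_k$, and the normalizers of the nonsplit and split tori (orders $2(q+1)$ and $2(q-1)$) for $V$ and $W$ respectively --- and match each monomial generator to a linear character induced from the corresponding subgroup. If anything, you supply more justification (Frobenius reciprocity, Mackey, the Weyl-orbit count for the principal series) than the paper, which simply asserts these inductions without indicating how to verify the decompositions.
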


\textbf{Acknowledgment.} We gratefully acknowledge the use of the computer algebra system GAP (\cite{gap}) and Normaliz (\cite{normaliz}) for our experiments. This first author was supported by a grant of the Ministry of Research, Innovation and Digitization, CNCS - UEFISCDI, project number PN-III-P1-1.1-TE-2021-1633, within PNCDI III.

{}

\vspace{2mm} \noindent {\footnotesize
\begin{minipage}[b]{15cm}
\emph{Mircea Cimpoea\c s}, University Politehnica of Bucharest, Faculty of
Applied Sciences, 
Bucharest, 060042, Romania and Simion Stoilow Institute of Mathematics, Research unit 5, P.O.Box 1-764,
Bucharest 014700, Romania  \\ 
 E-mail: mircea.cimpoeas@upb.ro,\;mircea.cimpoeas@imar.ro
\end{minipage}}

\vspace{2mm} \noindent {\footnotesize
\begin{minipage}[b]{15cm}
\emph{Alexandru Florin Radu}, University Politehnica of Bucharest, Faculty of
Applied Sciences, 
Bucharest, 060042, Romania.\\ 
 E-mail: sasharadu@icloud.com
\end{minipage}}

\end{document}